\documentclass[letterpaper, 11pt,  reqno]{amsart}

\usepackage{amsmath,amssymb,amscd,amsthm,amsxtra, esint}

\usepackage[implicit=true]{hyperref}

\setlength{\pdfpagewidth}{8.50in}
\setlength{\pdfpageheight}{11.00in}

\usepackage[left=32mm, right=32mm, 
bottom=27mm]{geometry}

\allowdisplaybreaks[2]

\sloppy

\hfuzz  = 0.5cm 


\usepackage{color}

\definecolor{gr}{rgb}   {0.,   0.69,   0.23 }
\definecolor{bl}{rgb}   {0.,   0.5,   1. }
\definecolor{mg}{rgb}   {0.85,  0.,    0.85}
\definecolor{yl}{rgb}   {0.8,  0.7,   0.}
\definecolor{or}{rgb}  {0.7,0.2,0.2}

\newtheorem{theorem}{Theorem} [section]

\newtheorem{lemma}[theorem]{Lemma}
\newtheorem{proposition}[theorem]{Proposition}
\newtheorem{remark}[theorem]{Remark}

\newtheorem*{acknowledgment}{Acknowledgments}


%



\newcommand{\noi}{\noindent}

\newcommand{\R}{\mathbb{R}}

\let\Re=\undefined\DeclareMathOperator*{\Re}{Re}
\let\Im=\undefined\DeclareMathOperator*{\Im}{Im}

\newcommand{\E}{\mathbb{E}}

\newcommand{\N}{\mathcal{N}}
\newcommand{\NB}{\mathbb{N}}

\newcommand{\F}{\mathcal{F}}

\newcommand{\be}{\beta}

\newcommand{\nb}{\nabla}

\newcommand{\Dl}{\Delta}
\newcommand{\eps}{\varepsilon}

\newcommand{\G}{\Gamma}
\newcommand{\ld}{\lambda}

\newcommand{\wt}{\widetilde}
\newcommand{\cj}{\overline}

\newcommand{\dt}{\partial_t}

\newcommand{\embeds}{\hookrightarrow}

\newcommand{\ta}{\theta}

\renewcommand{\o}{\omega}
\renewcommand{\O}{\Omega}

\newcommand{\les}{\lesssim}




\DeclareMathOperator{\Id}{Id}

\numberwithin{equation}{section}
\numberwithin{theorem}{section}

\newcommand{\too}{\longrightarrow}

\newcommand{\HS}{\textit{HS}\,}

\begin{document}
\baselineskip = 14pt

\title[On the mass-critical and energy-critical SNLS]
{On the stochastic nonlinear Schr\"{o}dinger equations  \\ at critical regularities}

\author[T. Oh]{Tadahiro Oh}
\address{
Tadahiro Oh\\
School of Mathematics\\
The University of Edinburgh\\
and The Maxwell Institute for the Mathematical Sciences\\
James Clerk Maxwell Building\\
The King's Buildings\\
 Peter Guthrie Tait Road\\
Edinburgh\\ 
EH9 3FD\\United Kingdom} 

\email{hiro.oh@ed.ac.uk}

\author[M. Okamoto]{Mamoru Okamoto}
\address{Mamoru Okamoto\\
Division of Mathematics and Physics, Faculty of Engineering, Shinshu University, 4-17-1 Wakasato, Nagano City 380-8553, Japan}
\email{m\_okamoto@shinshu-u.ac.jp}

\subjclass[2010]{35Q55}

\keywords{stochastic nonlinear Schr\"odinger equation; global well-posedness;
mass-critical; energy-critical; perturbation theory}

\begin{abstract}
We consider the Cauchy problem for  the defocusing  stochastic nonlinear Schr\"odinger equations (SNLS) with an additive noise
in the mass-critical  and  energy-critical settings.
By adapting the probabilistic perturbation argument 
employed 
in the context of the random data Cauchy theory
 by the first author with B\'enyi and Pocovnicu
(2015)  
to the current stochastic PDE setting,  
we present a concise argument to establish  global well-posedness of the mass-critical and energy-critical SNLS.

\end{abstract}


\maketitle

%
%
%

\section{Introduction}\label{SEC:1}

\subsection{Stochastic nonlinear Schr\"odinger equations}

We consider the Cauchy problem for the  stochastic nonlinear Schr\"{o}dinger equation (SNLS)
with an additive noise:
\begin{equation}
\label{SNLS}
\begin{cases}
 i \dt u + \Dl  u = |u|^{p-1} u + \phi \xi \\
 u|_{t = 0} = u_0,
\end{cases}
\qquad (t, x) \in \R_+\times \R^d,
\end{equation}

\noi
where $\xi (t,x)$ denotes a space-time white noise on $\R_+ \times \R^d$ 
and $\phi$ is a bounded operator on $L^2(\R^d)$. 
In this paper, we restrict our attention to the defocusing case.
Our main goal is to present a concise argument in 
establishing global well-posedness of \eqref{SNLS}
in the so-called  {\it mass-critical} and {\it energy-critical} cases.

Let us first go over the notion of the scaling-critical regularity
for the  (deterministic) defocusing nonlinear Schr\"odinger equation (NLS):
\begin{equation}
i \partial_t u +   \Delta u =  |u|^{p-1} u,   
\label{NLS1}
\end{equation}

\noi
namely, \eqref{SNLS} with $\phi \equiv 0$.
The equation \eqref{NLS1}
is known to enjoy   the following dilation symmetry:
\begin{align*}
 u(t, x) \longmapsto u^\ld(t, x) = \ld^{-\frac{2}{p-1}} u (\ld^{-2}t, \ld^{-1}x)
\end{align*}

\noi
for  $\ld >0$.
If $u$ is a solution to \eqref{NLS1}, then the scaled function $u^\ld$ is also a solution to
\eqref{NLS1}
 with the rescaled initial data.
This dilation symmetry induces the following  scaling-critical Sobolev regularity:
 \begin{align*}
 s_\text{crit} = \frac d2 - \frac{ 2}{p-1}
 \end{align*}
 
 \noi
such that the homogeneous $\dot{H}^{s_\text{crit}}(\R^d)$-norm is invariant
under the dilation symmetry.
This critical regularity $s_\text{crit}$ provides
a threshold regularity for well-posedness and ill-posedness
of \eqref{NLS1}.
Indeed, 
when $s \geq \max(s_\text{crit}, 0)$, 
the Cauchy problem \eqref{NLS1} is known to be locally well-posed in $H^s(\R^d)$
\cite{GV79,  Kato, Tsu, CW}.\footnote
{When $p$ is not an odd integer, 
we may need to impose an extra assumption 
due to the non-smoothness of the nonlinearity.
}
On the other hand,
it is known that NLS \eqref{NLS1} is ill-posed
in the scaling supercritical regime:  $s < s_\text{crit}$.
See \cite{CCT, Kishimoto,  O17}.

Next, we introduce two important critical regularities
associated with the following conservation laws
for \eqref{NLS1}:
\begin{align*}
\text{Mass: }&  M(u(t)) := \int_{\R^d} |u(t, x)|^2 dx,\\
\text{Energy: } &  E(u(t)) := \frac 12 \int_{\R^d} |\nb u(t, x)|^2 dx
+ \frac{d-2}{2d} \int_{\R^d} |u(t, x)|^\frac{2d}{d-2} dx.
\end{align*}

\noi
In view of these conservation laws, 
we say that the equation \eqref{NLS1} is 

\begin{itemize}
\item[(i)]
{\it mass-critical}
when $s_\text{crit} = 0$,
namely, when $p = 1 + \frac 4d$,

\vspace{1mm}

\item[(ii)]
{\it energy-critical}
when $s_\text{crit} = 1$, 
namely, 
when $p = 1 + \frac 4{d-2}$ and $d\geq 3$.

\end{itemize}

\noi
Over the last two decades,
we have seen a significant progress 
in the global-in-time theory of
the defocusing NLS \eqref{NLS1} in the mass-critical and energy-critical cases
\cite{BO99, TVZ07, V, RycVis07, 
CKSTT, Dod12, 
 Dod161, Dod162}.
 In particular, we now know that 
 
 \begin{itemize}
 \item[(i)]
 the defocusing mass-critical
 NLS \eqref{NLS1} with $p = 1+ \frac 4d$ is globally well-posed
 in $L^2(\R^d)$,
 
\item[(ii)]  the defocusing energy-critical
 NLS \eqref{NLS1} with $p = 1+ \frac 4{d-2}$, $d \geq 3$,  is globally well-posed
 in $\dot H^1(\R^d)$.

 \end{itemize}
 
 \noi
 Moreover, 
 the following space-time bound  on a global solution $u$ to \eqref{NLS1} 
holds:
\begin{align}
\| u \|_{L_{t,x}^{\frac{2(d+2)}{d-2k}} (\R \times \R^d)} \le C(\| u_0 \|_{H^k}) < \infty
\label{bound}
\end{align}

\noi
with (i) $k = 0$ in the mass-critical case and (ii) $k = 1$ in the energy-critical case.
This bound in particular implies that  global-in-time solutions scatter, 
 i.e.~they  asymptotically behave like linear solutions 
 as $t \to \pm \infty$.

Let us now turn our attention to SNLS \eqref{SNLS}.
We say that $u$ is a solution to \eqref{SNLS} if it satisfies the following 
Duhamel formulation (= mild formulation):
\begin{align}
u(t) = S(t) u_0 -i \int_0^t S(t-t') |u|^{p-1}u(t') dt' -i \int_0^t S(t-t') \phi \xi (dt'), 
\label{SNLS2}
\end{align}

\noi
where  $S(t) = e^{it \Dl}$ denotes the linear Schr\"{o}dinger propagator.
The last term on the right-hand side of \eqref{SNLS2}
is called the stochastic convolution, which we denote by $\Psi$.
Fix  a probability space $(\O, \F, P)$ endowed with a filtration $\{ \F_t \}_{t \ge 0}$
and 
let $W$ denote the $L^2(\R^d)$-cylindrical Wiener process
associated with the filtration $\{ \F_t \}_{t \ge 0}$;
see \eqref{Psi1} below for a precise definition.
Then, the stochastic convolution $\Psi$ is defined by 
\begin{align}
\begin{split}
\Psi(t)  &   =  - i \int_0^t S(t - t') \phi \xi (dt')\\
:\! & =  - i \int_0^t S(t - t') \phi dW(t'). 
\end{split}
\label{sconv1}
\end{align}

\noi
See Section \ref{SEC:2} for the precise meaning of 
the definition \eqref{sconv1}; in particular see  \eqref{Psi2}.

Our main goal is to construct global-in-time dynamics 
for \eqref{SNLS2} in the mass-critical and energy-critical cases.
This means
that we take (i) $p = 1 + \frac 4d$ in the mass-critical case
and (ii) $p = 1 + \frac{4}{d-2}$ in the energy-critical case.
Furthermore, 
we take 
 the stochastic convolution $\Psi$  in \eqref{sconv1} 
 to be at the corresponding critical regularity.
Suppose that  $\phi \in \HS(L^2; H^s)$, 
namely, $\phi$ is a Hilbert-Schmidt
operator from $L^2(\R^d)$ to $H^s(\R^d)$.
Then, it is known that 
$\Psi \in C(\R_+; H^s(\R^d))$ almost surely; see \cite{DZ}.
Therefore, we will impose that 
 (i)   $\phi \in \HS(L^2; L^2)$ in the mass-critical case
and (ii) 
  $\phi \in \HS(L^2; H^1)$  in the energy-critical case.

Previously, 
de Bouard and Debussche \cite{dBD03}
studied SNLS \eqref{SNLS}
in the energy-subcritical setting: $s_\text{crit} < 1$, 
assuming that $\phi \in \HS(L^2;H^1)$.
By using  the Strichartz estimates,  
they showed that  the stochastic convolution $\Psi$
almost surely belongs to a right Strichartz space, 
which  allowed them to prove
local well-posedness of \eqref{SNLS} in $H^1(\R^d)$ 
with  $\phi \in \HS(L^2; H^1)$
 in the energy-subcritical case:
$1 < p < 1 + \frac{4}{d-2}$ when $d \geq 3$ and $1 < p < \infty$ when $d = 1, 2$.
We point out that 
when  $s \geq \max (s_\text{crit}, 0)$,
a slight modification of  the argument in \cite{dBD03}
with the regularity properties of the stochastic convolution (see Lemma~\ref{LEM:sconv} below)
yields
local well-posedness\footnote{When $p$ is not an odd integer, 
an extra assumption
such as $p \geq [s] + 1$ may be needed.}
 of \eqref{SNLS}
in $H^s(\R^d)$, 
provided that $\phi\in \HS(L^2; H^s)$.
See Lemma~\ref{LEM:LWPa}
for the statements in the mass-critical and energy-critical cases.
We also mention recent papers \cite{OPW, CPoc}
on local well-posedness of \eqref{SNLS}
with additive noises rougher than the critical regularities, 
i.e.~$\phi \in \HS(L^2; H^s)$ with $s < s_\text{crit}$.

In the energy-subcritical case, 
assuming $\phi \in \HS(L^2; H^1)$, 
global well-posedness of \eqref{SNLS} in $H^1(\R^d)$ 
 follows from 
an  a priori $H^1$-bound of solutions to \eqref{SNLS} based on the conservation of the energy $E(u)$
for the deterministic NLS
and Ito's lemma; see \cite{dBD03}.  See also Lemma~\ref{LEM:bound}. 
In a recent paper   \cite{CLO}, 
 Cheung, Li, and the first author
adapted the $I$-method \cite{CKSTT0} to the stochastic PDE setting
and 
established 
 global well-posedness of energy-subcritical SNLS below $H^1(\R^d)$. 
In the mass-subcritical case, 
global well-posedness in $L^2(\R^d)$
also follows  from 
an  a priori $L^2$-bound based on the conservation of the mass $M(u)$
for the deterministic NLS
and Ito's lemma.

We extend these global well-posedness results
to the mass-critical and energy-critical settings.

\begin{theorem} \label{THM:GWP}
\textup{(i) (mass-critical case).}
Let $d \geq 1$ and $p = 1+\frac4d$.
Then, given $\phi \in \HS (L^2; L^2)$, 
the defocusing mass-critical  SNLS \eqref{SNLS}
is globally well-posed in $L^2(\R^d)$.

\smallskip

\noi
\textup{(ii) (energy-critical case).}
Let $3 \leq d \leq 6$
and $p = 1 + \frac 4{d-2}$.
Then, 
given $\phi \in \HS(L^2; H^1)$, 
the defocusing energy-critical  SNLS \eqref{SNLS}
is globally well-posed in $H^1(\R^d)$.

\end{theorem}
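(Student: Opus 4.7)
The plan is to implement the probabilistic perturbation scheme of B\'enyi-Oh-Pocovnicu: decompose $u = v + \Psi$ with $\Psi$ the stochastic convolution \eqref{sconv1}, so that the remainder $v$ solves the randomly-forced equation
\begin{equation*}
i\dt v + \Dl v = |v+\Psi|^{p-1}(v+\Psi), \qquad v|_{t=0} = u_0.
\end{equation*}
Viewing this as a perturbation of the purely deterministic critical NLS \eqref{NLS1} with the same initial data, I would exploit the global space-time bound \eqref{bound} (Dodson in the mass-critical case; Colliander-Keel-Staffilani-Takaoka-Tao and Ryckman-Visan in the energy-critical case) together with the long-time perturbation lemma of critical NLS to transfer Strichartz control onto $u$.

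Concretely, the steps are: (i) invoke Lemma~\ref{LEM:LWPa} for local well-posedness, producing a solution $u$ on a maximal interval $[0,T^*)$, so that it suffices to rule out $T^*<\infty$; (ii) invoke Lemma~\ref{LEM:bound} for the a priori bound $\sup_{t\le T\wedge T^*}\|u(t)\|_{H^k}\le R(T,\omega)<\infty$ on any finite $T$ (with $k=0$ in the mass-critical case and $k=1$ in the energy-critical case), which comes from It\^o applied to $M(u)$ or $E(u)$ plus the Hilbert-Schmidt hypothesis on $\phi$; (iii) choose a partition $0=t_0<t_1<\cdots<t_N=T$ of $[0,T]$ such that $\|\Psi\|_{\text{Strichartz}([t_{j-1},t_j])}<\eta$ for a small threshold $\eta>0$ to be chosen, which is possible almost surely since $\Psi$ lies in the critical Strichartz spaces by Lemma~\ref{LEM:sconv}; (iv) on each subinterval, compare $u$ with the deterministic NLS evolution $\wt w_j$ started from $u(t_{j-1})$, which obeys the global Strichartz bound $A=A(R)$, and close via the long-time perturbation lemma, whose tolerance $\eta=\eta(A)$ is fixed uniformly once $R$ is fixed. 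Iterating across the $N=N(R,\omega)$ subintervals propagates a Strichartz bound for $u$ on $[0,T]$, ruling out blow-up and hence forcing $T^*=\infty$ almost surely.

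The main obstacle is the interlocking dependence $A=A(R)$, $\eta=\eta(A)$ together with the pathwise (rather than uniform) nature of the Strichartz regularity of $\Psi$: one must fix $R$, hence $A$ and $\eta$, before choosing the partition, and then use the almost sure continuity of $\Psi$ into the critical Strichartz space to guarantee that $N(R,\omega)$ is finite pathwise. In the energy-critical case the restriction $3\le d\le 6$ is inherited from the deterministic theory, where the Lipschitz regularity of $|u|^{4/(d-2)}u$ required by the perturbation lemma degrades with $d$. No new probabilistic input beyond Lemma~\ref{LEM:sconv} (regularity of $\Psi$) and Lemma~\ref{LEM:bound} (a priori $H^k$ bound via It\^o) should be needed.
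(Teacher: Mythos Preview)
Your proposal is correct and follows the same strategy as the paper: decompose $u=v+\Psi$, feed in the a~priori $H^k$-bound from Lemma~\ref{LEM:bound} and the Strichartz regularity of $\Psi$ from Lemma~\ref{LEM:sconv}, then run a perturbation argument against the deterministic critical NLS on short subintervals. Two small corrections to step~(iv): the object to compare with $\wt w_j$ is $v$, not $u$ (which obeys an SPDE, not a deterministically perturbed PDE), and $\wt w_j$ should be launched from $v(t_{j-1})$ rather than $u(t_{j-1})$, since the resulting initial discrepancy $\Psi(t_{j-1})$ is only bounded in $H^k$, not small.

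The one point you underplay is that the error $e=\N_k(v+\Psi)-\N_k(v)$ depends on the unknown $v$ itself, so the long-time perturbation lemma (Lemma~\ref{LEM:perturb} or~\ref{LEM:perturbe}) cannot be applied as a black box on your subintervals: bounding $\|e\|$ presupposes Strichartz control of $v$, which is exactly what you are trying to establish. The paper handles this circularity in Proposition~\ref{PROP:perturb2} by an inner bootstrap: on each $\Psi$-small interval it further partitions according to smallness of the deterministic comparison $w$, uses the local theory (Lemma~\ref{PROP:LWP2}) to first obtain $\|v\|_{L^{2(d+2)/d}_{t,x}(I_j)}\lesssim\eta$, and only then bounds $e$ and invokes the perturbation lemma to propagate closeness to $w$ to the next sub-subinterval. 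This interleaving of local theory and perturbation is the actual technical core, and your outline should acknowledge that the ``long-time perturbation lemma'' you invoke is this adapted version rather than the off-the-shelf statement.
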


In the following, we only consider deterministic initial data $u_0$.
This assumption is, however,  not essential and 
we may also take random initial data
(measurable with respect to the filtration $\F_0$ at time 0).

In the mass-critical case 
(and the energy-critical case, respectively), 
the a priori $L^2$-bound
(and the a priori $H^1$-bound, respectively)
 does not suffice for global well-posedness
(even in the case of the deterministic NLS \eqref{NLS1}).
The main idea for proving Theorem \ref{THM:GWP} 
is to adapt the probabilistic perturbation argument
introduced by the authors
 \cite{BOP2, OOP}
in studying global-in-time behavior of solutions 
to the defocusing energy-critical cubic NLS
with random initial data below the energy space.
Namely, 
by letting $v = u - \Psi$, where $\Psi$ is the stochastic convolution defined in \eqref{sconv1},
we study the equation satisfied by $v$:
\begin{equation}
\begin{cases}
i \partial_t v +  \Delta v 
=  \N(v+ \Psi)\\
v|_{t = 0} = u_0, 
\end{cases}
\label{SNLS3}
\end{equation}

\noi
where $\N(u) = |u|^{p-1} u$.
Write  the nonlinearity as
\[\N(v+ \Psi)
= \N(v)  + \big(\N(v+ \Psi) - \N(v)\big).
\]

\noi
Then, the regularity properties of the stochastic convolution 
(see Lemma \ref{LEM:sconv} below)
and the fact that their space-time norms can be made small on short time intervals
allow us to view the second term on the right-hand side
as a perturbative term.
By invoking the perturbation lemma (Lemmas~\ref{LEM:perturb}
and~\ref{LEM:perturbe}), 
we then compare the solution $v$ to \eqref{SNLS3}
with a solution to the deterministic NLS \eqref{NLS1}
on short time intervals
as in  \cite{BOP2, OOP}.
See also \cite{TVZ, KOPV}
for similar arguments in the deterministic case.
In the energy-critical case, 
we rely on the Lipschitz continuity of $\nabla \N(u) $ in the perturbation argument, 
which imposes the assumption $d \le 6$ in Theorem \ref{THM:GWP}.

\begin{remark}\label{REM:adapted} \rm
We remark that solutions constructed
in this paper are adapted to the given filtration 
 $\{ \F_t \}_{t \ge 0}$.
For example, adaptedness of a solution $v$ to \eqref{SNLS3} directly follows
from the local-in-time construction of the solution
via the Picard iteration.
Namely, we consider  the map $\G$ defined by
\begin{equation*}
\G v(t) := S(t) u_0 -i  \int_{0}^t S(t -t') \N (v+\Psi)(t') dt'.
\end{equation*}

\noi
Then, we define the $j$th Picard iterate $P_j$ by setting
\begin{align}
\begin{split}
P_1 & =  S(t) u_0, \\
 P_{j+1} & = \G P_j
 =  S(t) u_0 -i  \int_{0}^t S(t -t') \N (P_j+\Psi)(t') dt'
\end{split}
\label{adapt1}
\end{align}

\noi
for $j \in \NB$.
Since the stochastic convolution $\Psi$ is adapted to the filtration
 $\{ \F_t \}_{t \ge 0}$, 
it is easy to see from \eqref{adapt1}
that  $P_j$ is adapted for each $j \in \NB$.
Furthermore, 
the local well-posedness of~\eqref{SNLS3} by a contraction mapping principle
(see Lemmas  \ref{PROP:LWP2} and 
\ref{PROP:LWP2e} below)
shows that the sequence $\{P_j\}_{j \in \NB}$ 
converges, in appropriate functions spaces,
to a limit $v = \lim_{j \to \infty} P_j$, which is a solution 
to (the mild formulation of)
 \eqref{SNLS3}.
By invoking the closure property of measurability under a limit, 
we conclude that the solution $v$ to \eqref{SNLS3}
is also adapted to the filtration
 $\{ \F_t \}_{t \ge 0}$.
The same comment applies to 
Lemma \ref{LEM:LWPa} below.

\end{remark}

\begin{remark}\rm
(i) In the focusing case, i.e. with $- |u|^{p-1} u$ in \eqref{SNLS}, 
de Bouard and Debussche~\cite{dBD02} proved under appropriate conditions that, 
starting with any initial data, 
finite-time blowup occurs with positive probability.

\smallskip
\noi
(ii)  In the mass-subcritical and energy-critical 
cases,  SNLS with a multiplicative noise has been studied in \cite{BRZ14, BRZ16, BRZ17}.
In recent preprints, 
 Fan and Xu \cite{FanXu1} and Zhang \cite{Zha}
 proved global well-posedness of SNLS with a multiplicative noise
in the mass-critical and energy-critical setting.

%
%
%
%

\end{remark}

\section{Preliminary results} \label{SEC:2}

In this section, we introduce some notations
and go over preliminary results.

Given two separable Hilbert spaces $H$ and $K$, we denote by $\HS (H;K)$ the space of Hilbert-Schmidt operators $\phi$ from $H$ to $K$, endowed with the norm:
\[
\| \phi \|_{\HS(H;K)} = \bigg( \sum_{n \in \NB} \| \phi e_n \|_K^2 \bigg)^{\frac{1}{2}},
\]
where $\{ e_n \}_{n \in \NB}$ is an orthonormal basis of $H$.

Since our focus is the mass-critical and energy-critical cases, 
we introduce
$\N_k(u)$, $k = 0, 1$, by 
\begin{align}
\N_0(u) := |u|^{\frac{4}{d}}u
\qquad \text{and}\qquad
\N_1(u) := |u|^{\frac{4}{d-2}}u.
\label{nonlin}
\end{align}

\noi
Namely, $k = 0$ corresponds to the mass-critical case, 
while $k =1$ corresponds to the energy-critical case.

The  Strichartz estimates play an important role in our analysis.
We say that
a pair  $(q, r)$ is  admissible
if $2\leq q, r \leq \infty$, $(q, r, d) \ne (2, \infty, 2)$, and 
\begin{equation*}
\frac{2}{q} + \frac{d}{r} = \frac{d}{2}.
\label{Admin}
\end{equation*}

\noi
Then, the following Strichartz estimates
are known to hold; see \cite{Strichartz, Yajima, GV, KeelTao}.

\begin{lemma} \label{LEM:Str}
Let $(q,r)$ be admissible.
Then, we have
\[
\| S(t) \phi \|_{L_t^q L_x^r} \lesssim \| \phi \|_{L^2}.
\]

\noi
For any admissible pair $(\wt{q}, \wt{r})$, we also have
\begin{align}
\left\| \int_0^t S(t-t') F(t') dt' \right\|_{L_t^q L_x^r} \lesssim \| F \|_{L_t^{\wt{q}'} L_x^{\wt{r}'}},
\label{Str1}
\end{align}

\noi
where $\wt{q}'$ and $\wt{r}'$ denote the H\"older conjugates.
Moreover, if the right-hand side of \eqref{Str1} is finite for some admissible pair $(\wt q, \wt r)$, 
then $\int_0^t S(t-t') F(t') dt'$ is continuous (in time) with values in $L^2(\R^d)$.
\end{lemma}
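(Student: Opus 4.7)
The plan is to derive the Strichartz estimates from the two fundamental properties of the linear Schr\"odinger propagator $S(t) = e^{it\Dl}$, namely the $L^2$-isometry $\|S(t)\phi\|_{L^2_x} = \|\phi\|_{L^2_x}$ (from Plancherel, since the symbol is unimodular) and the dispersive estimate $\|S(t)\phi\|_{L^\infty_x} \les |t|^{-d/2} \|\phi\|_{L^1_x}$ (from the explicit representation of $S(t)$ as convolution with a Gaussian-type kernel of modulus $|t|^{-d/2}$). First I would interpolate these two bounds to obtain, for any $2 \le r \le \infty$,
\[
\| S(t) \phi \|_{L^r_x} \les |t|^{-d(\frac12 - \frac 1r)} \| \phi \|_{L^{r'}_x}.
\]

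Next I would run the standard $TT^*$ argument. Setting $T f = S(t) f$, the homogeneous bound $\|Tf\|_{L^q_t L^r_x} \les \|f\|_{L^2}$ is equivalent to the dual bound $\|T^* F\|_{L^2} \les \|F\|_{L^{q'}_t L^{r'}_x}$, which in turn is equivalent to
\[
\bigg\| \int_\R S(-t') F(t') dt' \bigg\|_{L^2_x} \les \|F\|_{L^{q'}_t L^{r'}_x},
\]
and also to $\|TT^* F\|_{L^q_t L^r_x} \les \|F\|_{L^{q'}_t L^{r'}_x}$, where $TT^* F(t) = \int_\R S(t-t') F(t') dt'$. Plugging the dispersive estimate into $TT^*$ reduces matters, in the non-endpoint regime $(q,r) \ne (2, \frac{2d}{d-2})$, to the Hardy--Littlewood--Sobolev inequality in the time variable, which gives both the homogeneous estimate and the inhomogeneous estimate \eqref{Str1} once one also invokes the Christ--Kiselev lemma to pass from the full time line to the retarded integral $\int_0^t$. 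For the endpoint case $(q,r) = (2, \frac{2d}{d-2})$ with $d \ge 3$, the HLS step fails and one must invoke the bilinear interpolation argument of Keel--Tao; I would simply cite this rather than reproduce it.

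Finally, for the continuity in $t$ with values in $L^2(\R^d)$, I would use a density argument: for $F \in C^\infty_c(\R \times \R^d)$ the map $t \mapsto \int_0^t S(t-t') F(t') dt'$ is manifestly continuous into $L^2$, and since \eqref{Str1} applied with the admissible pair $(q,r) = (\infty, 2)$ on the left-hand side shows that the solution operator is bounded from $L^{\wt q'}_t L^{\wt r'}_x$ into $C_t L^2_x$, the continuity extends to all $F$ with finite right-hand side by approximation. The only genuine obstacle is the endpoint Strichartz estimate; the rest is a routine chain of $TT^*$, HLS, and density arguments.
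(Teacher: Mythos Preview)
Your sketch is correct and follows the standard route (dispersive estimate, $TT^*$, Hardy--Littlewood--Sobolev in the non-endpoint case, Keel--Tao at the endpoint, Christ--Kiselev for the retarded integral, and density for the $C_t L^2_x$ claim). The paper, however, does not prove this lemma at all: it simply states the estimates and cites \cite{Strichartz, Yajima, GV, KeelTao}. So there is no ``paper's own proof'' to compare against; your outline is essentially what one finds in those references, particularly \cite{KeelTao} for the endpoint and \cite{GV, Yajima} for the non-endpoint inhomogeneous estimates.
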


Next, we provide a precise meaning to the stochastic convolution defined in \eqref{sconv1}.
Let $(\O, \F, P)$ be  a probability space  endowed with a filtration $\{ \F_t \}_{t \ge 0}$.
Fix an orthonormal basis $\{ e_n \}_{n \in \NB}$ of $L^2(\R^d)$.
We define an $L^2 (\R^d)$-cylindrical Wiener process $W$ by 
\begin{align}
W(t,x,\o) := \sum_{n \in \NB} \beta_n (t,\o)  e_n (x),
\label{Psi1}
\end{align}

\noi
where $\{ \beta_n \}_{n \in \NB}$ is a family of mutually independent complex-valued Brownian motions associated with the filtration $\{ \F_t \}_{t \ge 0}$.
Here, the complex-valued Brownian motion means that $\Re \beta_n(t)$ and $\Im \beta_n(t)$ are independent (real-valued) Brownian motions.
Then, the space-time white noise $\xi$ is given by a distributional derivative (in time) 
of $W$
and thus
we can express the stochastic convolution $\Psi$ as
\begin{align}
\Psi (t) = -i \sum_{n \in \NB} \int_0^t S(t-t') \phi e_n  \, d \beta_n (t'), 
\label{Psi2}
\end{align}

\noi
where each summand is a classical Wiener integral (with respect to the integrator
$d \be_n$); see \cite{Kuo}.
Then, we have the following lemma on the regularity properties of the stochastic convolution.
See, for example, Proposition 5.9 in \cite{DZ}
for Part (i).
As for Part (ii), see \cite{OPW}.

\begin{lemma} \label{LEM:sconv}
Let $d \geq 1$,  $T>0$, and $s \in \R$.
Suppose that  $\phi \in \HS (L^2; H^s)$.

\smallskip

\noi
\textup{(i)}
We have $\Psi \in C ( [0,T];H^s (\R^d))$ almost surely.
Moreover, for any finite $p  \ge 1$, there exists $C = C (T, p) >0$ such that
\[
\E \bigg[ \sup_{0 \le t \le T} \| \Psi (t) \|_{H^s}^p \bigg] 
\le C \| \phi \|_{\HS(L^2; H^s)}^p.
\]

\smallskip

\noi
\textup{(ii)}
Given $1 \le q < \infty$ and finite $r \ge 2$ such that $r \le \frac{2d}{d-2}$ when $d \ge 3$, we have $\Psi \in L^q ([0,T]; W^{s,r} (\R^d))$ almost surely.
Moreover,  for any finite $p \ge  1$, there exists $C = C (T, p) >0$ such that
\[
\E \bigg[  \| \Psi \|_{L^q([0, T]; W^{s,r}(\R^d))}^p \bigg] 
\le C \| \phi \|_{\HS(L^2;H^s)}^p.
\]
\end{lemma}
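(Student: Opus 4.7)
The plan rests on three ingredients: the Itô isometry applied to the series representation \eqref{Psi2}, the unitarity of $S(t)$ on every $H^s(\R^d)$, and Gaussian hypercontractivity (or Fernique's theorem), which upgrades second moments of a Gaussian process to all moments of order $p \ge 2$. First I would establish the pointwise-in-$t$ second moment bound: by Itô's isometry, the mutual independence of $\{\be_n\}$, and the $H^s$-isometry of $S(t)$,
\begin{align*}
\E\|\Psi(t)\|_{H^s}^2 = \sum_{n \in \NB} \int_0^t \|S(t-t')\phi e_n\|_{H^s}^2 \, dt' = t \, \|\phi\|_{\HS(L^2;H^s)}^2.
\end{align*}

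For part (i), to upgrade this pointwise bound to the supremum bound and simultaneously obtain continuity in $t$, I would invoke the Da Prato--Zabczyk factorization method. Picking $\al \in (0, 1/2)$, one writes
\begin{align*}
\Psi(t) = \frac{\sin \pi \al}{\pi} \int_0^t (t-\sigma)^{\al - 1} S(t-\sigma) Y(\sigma) \, d\sigma,
\end{align*}
where $Y(\sigma) := -i \sum_{n \in \NB} \int_0^\sigma (\sigma - \tau)^{-\al} S(\sigma - \tau) \phi e_n \, d\be_n(\tau)$ is itself an $H^s$-valued Gaussian process whose moments in $L^p(\O; L^p([0,T]; H^s))$ are controlled by Itô's isometry plus hypercontractivity. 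The deterministic convolution operator then sends $L^p([0,T]; H^s)$ into $C([0,T]; H^s)$ for $p > 1/\al$ by Young's inequality and the unitarity of $S$, which yields both the almost-sure continuity and the moment bound claimed in (i).

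For part (ii), I would bring the $L^p(\O)$ norm inside the space-time integrals by Minkowski's inequality (valid as long as $p \ge \max(q, r)$; smaller $p$ is then recovered by Hölder in $\O$). Pointwise in $(t,x)$, the quantity $\langle \nb \rangle^s \Psi(t,x)$ is a complex Gaussian random variable, so Khintchine's inequality gives
\begin{align*}
\bigl(\E |\langle \nb \rangle^s \Psi(t,x)|^p\bigr)^{1/p} \les_p \bigl(\E |\langle \nb \rangle^s \Psi(t,x)|^2\bigr)^{1/2},
\end{align*}
and the variance is again computed by Itô's isometry. The remaining deterministic bound on $(\E |\langle \nb \rangle^s \Psi(t,\cdot)|^2)^{1/2}$ in $L^q_t L^r_x$ is then obtained by combining the Hilbert--Schmidt structure of $\phi$ with the (dual) Strichartz estimate \eqref{Str1} from Lemma~\ref{LEM:Str}; the constraint $r \le \frac{2d}{d-2}$ when $d \ge 3$ is precisely what places $(q,r)$ in the Strichartz admissibility range.

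The main technical obstacle lies in part (ii): one must orchestrate the Minkowski/Khintchine interchange so that, after squaring the variance and summing over $n$, the resulting quantity is controlled by $\|\phi\|_{\HS(L^2;H^s)}$ rather than a heavier Sobolev--Schatten norm, all while staying within the Strichartz range dictated by $r$. Once that bookkeeping is done correctly, the rest reduces to standard deterministic estimates.
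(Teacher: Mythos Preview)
The paper does not actually prove this lemma; it defers to Proposition~5.9 in \cite{DZ} for part~(i) and to \cite{OPW} for part~(ii). Your factorization argument for~(i) is precisely the Da~Prato--Zabczyk proof, and the Minkowski/Khintchine reduction you outline for~(ii) is the standard route in that literature, so the overall strategy is correct and matches the cited sources.

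One point in part~(ii) needs correction. The constraint $r \le \frac{2d}{d-2}$ does \emph{not} make $(q,r)$ Strichartz-admissible for the arbitrary $q \in [1,\infty)$ allowed in the statement; it only guarantees that $r$ is an admissible \emph{spatial} exponent, paired with some $q_0 \ge 2$. What saves the argument is that after applying Minkowski in $L^{r/2}_x$ (valid since $r \ge 2$) to the variance, one is left with
\[
\sum_{n} \int_0^t \| S(\tau) \langle \nabla \rangle^s \phi e_n \|_{L^r_x}^2 \, d\tau
\le \sum_{n} \| S(\cdot) \langle \nabla \rangle^s \phi e_n \|_{L^2_\tau([0,T];L^r_x)}^2,
\]
and since $q_0 \ge 2$, H\"older in time on $[0,T]$ followed by the homogeneous Strichartz estimate bounds this by $C(T)\|\phi\|_{\HS(L^2;H^s)}^2$, uniformly in $t$. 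The outer $L^q_t$ integration over $[0,T]$ is then trivial for any finite $q$. With this clarification your outline goes through.
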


By the Strichartz estimates (Lemma \ref{LEM:Str})
and Lemma \ref{LEM:sconv} on the stochastic convolution, 
one can easily prove the following local well-posedness
(see Lemma \ref{LEM:LWPa} below)
of the mass-critical and energy-critical SNLS \eqref{SNLS}
by essentially following the argument in \cite{dBD03}, 
namely, 
by studying the Duhamel formulation for $v = u - \Psi$:
\[
v(t) = S(t) u_0 -i \int_0^t S(t-t') \N (v+ \Psi) (t') dt'.
\]

\noi
See also Lemmas~\ref{PROP:LWP2} 
and~\ref{PROP:LWP2e} below.
In the mass-critical case, 
the admissible pair
 $q=r=\frac{2(d+2)}{d}$ plays an important role.
In the energy-critical case, 
we use the following admissible pair
\begin{equation} \label{adqr}
(q_d, r_d) := \left( \frac{2d}{d-2}, \frac{2d^2}{d^2-2d+4} \right)
\end{equation}
for $d \ge 3$.

\begin{lemma} \label{LEM:LWPa}
\noi
\textup{(i)\,(mass-critical case).}
Let $d \geq 1$, $p = 1 + \frac 4d$,  and  $\phi \in\HS (L^2; L^2)$.
Then, 
given any  $u_0 \in L^2(\R^d)$, 
there exists an almost surely positive stopping time $T= T_{\o} (u_0) $
 and a unique local-in-time solution $u \in C([0,T]; L^2(\R^d))$ to the  mass-critical SNLS \eqref{SNLS}.
Furthermore, the following blowup alternative holds;
let $T^{\ast} = T^{\ast}_{\omega} (u_0)$ be the forward maximal time of existence.
Then, either
\[
T^{\ast} = \infty
\qquad \text{or} \qquad
\lim_{T \nearrow T^{\ast}} \| u \|_{L_{t,x}^{\frac{2(d+2)}{d}}([0,T) \times \R^d)} = \infty.
\]

\smallskip

\noi
\textup{(ii) (energy-critical case).}
Let  $3 \le d \le 6$, $p = 1 + \frac 4{d-2}$,   and $\phi \in\HS (L^2;H^1)$.
Then, given any  $u_0 \in H^1(\R^d)$, 
 there exists an almost surely positive  stopping time $T= T_{\o} (u_0) $  and 
 a unique local-in-time solution $u \in C([0,T]; H^1(\R^d))$ to the  energy-critical SNLS \eqref{SNLS}.
Furthermore,  the following blowup alternative holds;
let $T^{\ast} = T^{\ast}_{\omega} (u_0)$ be the forward maximal time of existence.
Then, either
\[
T^{\ast} = \infty
\qquad \text{or} \qquad
\lim_{T \nearrow T^{\ast}} \| u \|_{L^{q_d}([0,T);W^{1,r_d}(\R^d))} = \infty.
\]
\end{lemma}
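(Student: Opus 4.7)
The plan is to follow \cite{dBD03} and construct a solution $u$ to \eqref{SNLS} by instead solving the (random, but pathwise deterministic) PDE for $v := u-\Psi$, namely
\begin{equation*}
v(t) = S(t) u_0 - i\int_0^t S(t-t') \N_k(v+\Psi)(t')\,dt',
\end{equation*}
via a fixed-point argument in a suitable Strichartz-type space on $[0,T]$, treating $\Psi$ as a given forcing term whose regularity is furnished by Lemma~\ref{LEM:sconv}. Since $\Psi \in C([0,T];H^s) \cap L^q([0,T];W^{s,r})$ almost surely with $s=k$, the stopping time $T=T_\omega(u_0)$ will simply be chosen pathwise so that the relevant norms of $\Psi$ on $[0,T]$ are small; once $v$ is obtained, we recover the solution $u = v + \Psi$.

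In the mass-critical case, I would work in the ball
\begin{equation*}
X_T = \bigl\{ v \in C([0,T]; L^2) \cap L^{\frac{2(d+2)}{d}}_{t,x}([0,T]\times\R^d) : \|v\|_{X_T} \le R \bigr\},
\end{equation*}
using the admissible pair $(q,r)=\bigl(\tfrac{2(d+2)}{d},\tfrac{2(d+2)}{d}\bigr)$ throughout. Lemma~\ref{LEM:Str} controls the linear part by $\|u_0\|_{L^2}$, and the nonlinear part reduces, via H\"older in space-time, to the elementary estimate
\begin{equation*}
\bigl\| \N_0(v+\Psi) \bigr\|_{L^{\frac{2(d+2)}{d+4}}_{t,x}} \lesssim \bigl(\|v\|_{L^{\frac{2(d+2)}{d}}_{t,x}} + \|\Psi\|_{L^{\frac{2(d+2)}{d}}_{t,x}}\bigr)^{\frac{d+4}{d}},
\end{equation*}
with an analogous difference estimate. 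The crucial smallness needed to close the contraction comes from two sources: dominated convergence gives $\|S(t)u_0\|_{L^{\frac{2(d+2)}{d}}_{t,x}([0,T]\times\R^d)} \to 0$ as $T\to 0^+$, and Lemma~\ref{LEM:sconv}(ii) (with $s=0$, $q=r=\tfrac{2(d+2)}{d}$, noting $r \le \tfrac{2d}{d-2}$ for $d\ge 3$ and the corresponding statement is trivial for $d=1,2$) gives $\|\Psi\|_{L^{\frac{2(d+2)}{d}}_{t,x}([0,T]\times\R^d)} \to 0$ almost surely. Choosing $T=T_\omega(u_0)>0$ small makes both pieces small, yielding a unique fixed point.

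The energy-critical case is handled analogously, but in the $H^1$-based Strichartz space $C([0,T];H^1) \cap L^{q_d}([0,T]; W^{1,r_d})$ with $(q_d,r_d)$ as in \eqref{adqr}. The main technical point is the nonlinear estimate
\begin{equation*}
\bigl\| \nabla \N_1(v+\Psi) \bigr\|_{L^{\tilde q'}_t L^{\tilde r'}_x} \lesssim \bigl(\|v+\Psi\|_{L^{q_d}_t L^{r_d}_x}\bigr)^{\frac{4}{d-2}} \|\nabla(v+\Psi)\|_{L^{q_d}_t L^{r_d}_x},
\end{equation*}
for a dual admissible pair $(\tilde q', \tilde r')$ chosen to match $(q_d, r_d)$ via H\"older — this is where the constraint $d\le 6$ enters, since one needs the derivative of $\N_1$ to be at least Lipschitz to get a difference estimate of the same form, and the exponent $\tfrac{4}{d-2}$ must be $\ge 1$. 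The corresponding smallness for $\Psi$ in $L^{q_d}_t W^{1,r_d}_x$ on short intervals comes from Lemma~\ref{LEM:sconv}(ii) applied with $s=1$, noting $r_d<\tfrac{2d}{d-2}$. I expect this nonlinear estimate, together with verifying the H\"older bookkeeping and the various cross terms in $\nabla \N_1(v+\Psi)$, to be the most delicate piece; everything else (continuity in time with values in $H^1$ via the last clause of Lemma~\ref{LEM:Str}, and the blowup alternative by iterating the local construction and showing that the solution cannot be extended only if the critical space-time norm diverges) follows the standard template.
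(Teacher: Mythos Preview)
Your proposal is correct and follows essentially the same approach as the paper, which does not give a formal proof of this lemma but rather sketches it by pointing to \cite{dBD03} and to Lemmas~\ref{PROP:LWP2} and~\ref{PROP:LWP2e} (the deterministic perturbed-NLS contraction arguments with a general forcing $f$, applied with $f=\Psi$). The only minor imprecision is in your energy-critical nonlinear estimate: the undifferentiated factor should sit in $L^{q_d}_t L^{\rho_d}_x$ with $\rho_d = \tfrac{2d^2}{(d-2)^2}$ (via the embedding $W^{1,r_d}\hookrightarrow L^{\rho_d}$ in \eqref{est:energy1}) rather than in $L^{q_d}_t L^{r_d}_x$, but you already flag this H\"older bookkeeping as the piece requiring care.
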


We note that the mapping: $(u_0, \Psi) \mapsto v$ is continuous.
See Proposition 3.5 in \cite{dBD03}.
In the energy-critical case, the local-in-time well-posedness also holds for $d >6$ (see Remark \ref{REM:LWPe} below).
As mentioned earlier, 
 the perturbation argument requires the Lipschitz continuity of $\nabla \N$ 
 and hence we need to assume $d \le 6$ in the following.

Lastly, we state the a priori bounds on 
the mass and energy of 
solutions constructed in Lemma \ref{LEM:LWPa}.

\begin{lemma} \label{LEM:bound}
\textup{(i)\,(mass-critical case).}
Assume the hypotheses in  Lemma \ref{LEM:LWPa} (i).
Then, given $T_0>0$, 
there exists $C_1 = C_1 (M(u_0), T_0, \| \phi \|_{\HS (L^2; L^2)})>0$ such that for any stopping time $T$ with $0<T< \min (T^{\ast}, T_0)$ almost surely, we have
\begin{align}
\E \bigg[ \sup_{0\le t \le T} M(u(t)) \bigg] \le C_1,
\label{app1}
\end{align}

\noi
where $u$ is the solution to the  mass-critical SNLS \eqref{SNLS} with $u|_{t = 0}=u_0$
and 
 $T^{\ast} = T^{\ast}_{\omega} (u_0)$ is the forward maximal time of existence.

\smallskip

\noi
\textup{(ii) (energy-critical case).}
Assume the hypotheses in  Lemma \ref{LEM:LWPa} (ii).
Then, given $T_0 > 0$, there exists $C_2 = C_2 (M(u_0), E(u_0), T_0, \| \phi \|_{\HS (L^2;H^1)})>0$ such that for any stopping time $T$ with $0<T< \min (T^{\ast}, T_0)$ almost surely, we have
\[
\E \bigg[ \sup_{0\le t \le T} M(u (t)) \bigg] + \E \left[ \sup_{0\le t \le T} E(u (t)) \right] \le C_2,
\]
where $u$ is the solution to the defocusing energy-critical SNLS \eqref{SNLS} with $u|_{t = 0}=u_0$
and 
 $T^{\ast} = T^{\ast}_{\omega} (u_0)$ is the forward maximal time of existence.
\end{lemma}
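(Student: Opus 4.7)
The plan is to read off both bounds from an application of It\^o's formula to the functionals $M(u)$ and $E(u)$, combined with the Burkholder--Davis--Gundy (BDG) inequality and (for part (ii)) Gronwall's lemma. In both parts the essential input is that $M$ and $E$ are invariants of the deterministic NLS \eqref{NLS1}, so that the drift produced by the Hamiltonian dynamics vanishes in $dM(u)$ and $dE(u)$; only the contributions of the additive noise $-i\phi\, dW$ survive, and these are controlled by $\|\phi\|_{\HS(L^2;L^2)}^2$ or $\|\phi\|_{\HS(L^2;H^1)}^2$.

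For part (i), I would write \eqref{SNLS} in the form $du=(i\Dl u-i\N_0(u))\,dt-i\phi\,dW$ and apply It\^o's formula to $M(u)=\|u\|_{L^2}^2$. The Hamiltonian drift $2\,\Re\,\langle u,\,i\Dl u-i\N_0(u)\rangle$ vanishes by self-adjointness, leaving a martingale of the form $-2\,\Im\sum_n\int_0^{\cdot}\langle u,\phi e_n\rangle\,d\beta_n$ plus an It\^o correction equal to a constant multiple of $\|\phi\|_{\HS(L^2;L^2)}^2\,t$. Stopping at $T$, taking the supremum, and invoking BDG yields a term of the form $C\|\phi\|_{\HS(L^2;L^2)}\,\E\bigl[\bigl(\int_0^T\|u\|_{L^2}^2\,dt\bigr)^{1/2}\bigr]$, which I would absorb into $\tfrac12\E[\sup_t M(u)]$ via Young's inequality to conclude \eqref{app1}.

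For part (ii) the mass bound is identical. For the energy, I would apply It\^o's formula to $E(u)$; the Hamiltonian drift again vanishes by conservation under \eqref{NLS1}, and the second-order It\^o correction splits into a kinetic piece $\sum_n\|\nabla(\phi e_n)\|_{L^2}^2$, which is controlled by $\|\phi\|_{\HS(L^2;H^1)}^2$, plus a potential piece of schematic form $\sum_n\int |u|^{4/(d-2)}|\phi e_n|^2\,dx$. I would bound the latter by H\"older's inequality with exponents $(d/2,d/(d-2))$, producing $\|u\|_{L^{2d/(d-2)}}^{4/(d-2)}\sum_n\|\phi e_n\|_{L^{2d/(d-2)}}^2$, and then sum the series using the Sobolev embedding $H^1\hookrightarrow L^{2d/(d-2)}$ (valid for $3\le d\le 6$, and indeed for all $d\ge 3$) to obtain a bound by $(1+E(u))\,\|\phi\|_{\HS(L^2;H^1)}^2$ after dominating the potential prefactor by the energy. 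A BDG estimate on the martingale part combined with Gronwall's lemma then yields the claimed bound.

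The step I expect to be most delicate is precisely the treatment of the potential It\^o correction in part (ii): a naive pointwise bound on the diagonal $\sum_n|\phi e_n(x)|^2$ of the covariance of $\phi\phi^\ast$ need not be integrable, so one is forced to pair H\"older with a Sobolev embedding applied to each $\phi e_n$ individually, and this is where the assumption $\phi\in\HS(L^2;H^1)$ enters essentially. A secondary technical point is that $M$ and $E$ are not smooth functionals on $L^2$ and $H^1$, so strictly speaking one must first apply It\^o's formula to a mollified version of these functionals (or to a truncated equation) and pass to the limit using the local well-posedness of Lemma~\ref{LEM:LWPa} together with continuous dependence; this is standard and follows the template of \cite{dBD03}.
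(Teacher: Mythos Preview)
Your overall plan—It\^o's formula plus BDG plus an approximation to justify It\^o—is exactly the paper's strategy, and your treatment of the potential It\^o correction via H\"older and Sobolev (putting $|u|^{4/(d-2)}$ in $L^{d/2}$ and each $|\phi e_n|^2$ in $L^{d/(d-2)}$) matches the paper's estimate precisely. The analytic skeleton is correct.

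The one place where your proposal understates the work is the last sentence: you call the approximation step ``standard'' and say it ``follows the template of \cite{dBD03}''. The paper explicitly flags that in the mass- and energy-\emph{critical} settings this step is \emph{not} routine, and in fact the entire Appendix is devoted to it. The reason is that in \cite{dBD03} (subcritical) the local existence time depends only on the $H^1$-norm of the data, so the frequency-truncated approximants $u_N$ automatically exist on $[0,T]$ and converge there by continuous dependence. At criticality, local existence depends on the \emph{profile} of the data through smallness of the linear evolution in a Strichartz norm; there is no a priori reason the $u_N$ exist on all of $[0,T]$, let alone converge. The paper handles this by (i) introducing a stopping time $T_2=T\wedge T_1(R)$ where $T_1(R)$ cuts off when the Strichartz norm of the \emph{true} solution $u$ reaches $R$ (finite a.s.\ by the blowup alternative), (ii) partitioning $[0,T_2]$ into subintervals on which $\|u\|_{L^{2(d+2)/d}_{t,x}}$ (resp.\ $\|u\|_{L^{q_d}W^{1,r_d}}$) is $\sim\eta$, and (iii) running a continuity/bootstrap argument on each subinterval to transfer the Strichartz smallness from $u$ to $u_N$ and conclude $u_N\to u$ in $C_tL^2_x$ (resp.\ $C_tH^1_x$). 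Only then does Fatou's lemma close the bound for $u$ from the uniform-in-$N$ bound for $u_N$.

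So the proposal is right in spirit, but the phrase ``standard and follows the template of \cite{dBD03}'' is precisely where the critical-case argument requires genuinely new input—a Strichartz-based stability argument for the approximating sequence—and this is the main content of the paper's proof.
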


For Part (ii),  we need to assume that the equation is defocusing.
These a priori bounds  follow
from Ito's lemma and the Burkholder-Davis-Gundy inequality.
In order to justify an application of Ito's lemma, 
one needs to go through a certain approximation argument.
See, for example,  Proposition 3.2 in \cite{dBD03}. 
In our mass-critical and energy-critical settings, however, 
such an approximation argument is more involved
and hence
we present a  sketch of  the argument in Appendix~\ref{SEC:A}.

\section{Mass-critical case}
\label{SEC:mass}

In this section, we prove global well-posedness
of  the defocusing mass-critical SNLS \eqref{SNLS} (Theorem \ref{THM:GWP}\,(i)).
 In Subsection \ref{SUBSEC:mass1}, 
 we first study the following 
 defocusing mass-critical 
  NLS with a deterministic perturbation:
\begin{equation} \label{ZNLS10}
i \dt v + \Dl v = \N_0 (v+f),
\end{equation}

\noi
where $\N_0$ is as in \eqref{nonlin} and $f$ is a given deterministic function,
 satisfying certain regularity conditions.
By applying the perturbation lemma, 
we prove global existence for \eqref{ZNLS10}, 
assuming an a priori $L^2$-bound of a solution $v$ to \eqref{ZNLS10}.
See Proposition \ref{PROP:perturb2}.
 In Subsection~\ref{SUBSEC:mass2},
 we then present the proof of  
 Theorem \ref{THM:GWP}\,(i)
 by writing \eqref{SNLS}
 in the form \eqref{ZNLS10}
 (with $f = \Psi$) 
 and verifying 
 the hypotheses in Proposition \ref{PROP:perturb2}.

\subsection{Mass-critical  NLS with a perturbation}
\label{SUBSEC:mass1}

By the standard Strichartz theory, 
we have the following local well-posedness of 
the perturbed NLS \eqref{ZNLS10}.

\begin{lemma}\label{PROP:LWP2}
There exists  small $\eta_0 >0$ such that
if
\begin{align*}
 \|S(t-t_0)v_0\|_{L_{t,x}^{\frac{2(d+2)}{d}} (I \times \R^d)} + \|f\|_{L_{t,x}^{\frac{2(d+2)}{d}} (I \times \R^d)} \leq \eta
\end{align*}

\noi
for some $\eta \leq \eta_0$ and
some time interval  $I = [t_0, t_1] \subset \R$,
then there exists a unique solution 
$v \in  C(I; L^2(\R^d))\cap L_{t, x}^{\frac{2(d+2)}{d}} (I \times \R^d)$
to \eqref{ZNLS10} with  $v(t_0) = v_0 \in L^2(\R^d)$.
Moreover, we have
\begin{align*}
\|v \|_{L_{t,x}^{\frac{2(d+2)}{d}} (I \times \R^d)} \le 2 \eta.
\end{align*}
\end{lemma}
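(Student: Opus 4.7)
The plan is a standard contraction mapping argument for the Duhamel map
\[
\Gamma v(t) := S(t-t_0)v_0 - i\int_{t_0}^t S(t-t')\,\N_0(v+f)(t')\,dt',
\]
performed on the complete metric space
\[
X := \Big\{v \in L^{\frac{2(d+2)}{d}}_{t,x}(I\times\R^d) : \|v\|_{L^{2(d+2)/d}_{t,x}(I\times\R^d)} \le 2\eta\Big\}
\]
equipped with the $L^{2(d+2)/d}_{t,x}$ metric. The relevant admissible pair is $(q,r) = \bigl(\tfrac{2(d+2)}{d},\tfrac{2(d+2)}{d}\bigr)$; its H\"older conjugate $\bigl(\tfrac{2(d+2)}{d+4},\tfrac{2(d+2)}{d+4}\bigr)$ will serve as the dual admissible pair when invoking the inhomogeneous Strichartz estimate \eqref{Str1}.

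The self-mapping and contraction estimates both hinge on the scaling identity
\[
\tfrac{2(d+2)}{d+4}\cdot\bigl(1+\tfrac{4}{d}\bigr) = \tfrac{2(d+2)}{d},
\]
which yields the H\"older-type identity $\|\N_0(v+f)\|_{L^{2(d+2)/(d+4)}_{t,x}} = \|v+f\|_{L^{2(d+2)/d}_{t,x}}^{1+4/d}$. Combining this with \eqref{Str1} and the hypotheses on $\|S(t-t_0)v_0\|_{L^{2(d+2)/d}_{t,x}}$ and $\|f\|_{L^{2(d+2)/d}_{t,x}}$, I obtain
\[
\|\Gamma v\|_{L^{2(d+2)/d}_{t,x}} \le \eta + C(3\eta)^{1+4/d} \le 2\eta
\]
for any $\eta \le \eta_0$ with $\eta_0$ chosen sufficiently small (depending only on $d$ and the Strichartz constant). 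For contractivity, the same dual Strichartz estimate together with the pointwise bound $|\N_0(u_1)-\N_0(u_2)| \lesssim (|u_1|^{4/d}+|u_2|^{4/d})|u_1-u_2|$ and H\"older give
\[
\|\Gamma v_1 - \Gamma v_2\|_{L^{2(d+2)/d}_{t,x}} \lesssim \eta^{4/d}\,\|v_1-v_2\|_{L^{2(d+2)/d}_{t,x}},
\]
which is a strict contraction after shrinking $\eta_0$ if necessary. The Banach fixed-point theorem then supplies a unique $v \in X$ with $\Gamma v = v$, and it automatically satisfies the stated bound $\|v\|_{L^{2(d+2)/d}_{t,x}} \le 2\eta$.

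To upgrade to $v \in C(I;L^2(\R^d))$, I would apply the inhomogeneous Strichartz estimate \eqref{Str1} once more, this time with the admissible pair $(\infty,2)$ on the left-hand side. Combined with the trivial bound $\|S(\cdot - t_0)v_0\|_{L^\infty_t L^2_x} = \|v_0\|_{L^2}$ and the finiteness of $\|\N_0(v+f)\|_{L^{2(d+2)/(d+4)}_{t,x}}$ established above, this gives $v \in L^\infty_t L^2_x$; the time-continuity into $L^2(\R^d)$ follows from the strong continuity of $S(t)$ on $L^2$ together with the final assertion of Lemma~\ref{LEM:Str}. No single step poses a genuine obstacle; the only point that requires care is verifying the scaling identity displayed above, which is precisely the algebraic reason the argument closes at the mass-critical exponent.
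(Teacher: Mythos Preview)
Your proposal is correct and follows essentially the same route as the paper's own proof: a contraction mapping argument for the Duhamel map on the ball $B_{2\eta}\subset L_{t,x}^{2(d+2)/d}(I\times\R^d)$, using the admissible pair $\bigl(\tfrac{2(d+2)}{d},\tfrac{2(d+2)}{d}\bigr)$ and its dual together with the scaling identity $\tfrac{2(d+2)}{d+4}\cdot(1+\tfrac{4}{d})=\tfrac{2(d+2)}{d}$, followed by an application of Strichartz with the $(\infty,2)$ pair to place $v$ in $C(I;L^2)$. The only cosmetic difference is that the paper absorbs the constant $3^{1+4/d}$ into $C$ and writes $\eta+C\eta^{1+4/d}$ directly.
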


\begin{proof}
We show that the map $\G$ defined by
\begin{equation*}
\G v(t) := S(t-t_0) v_0 -i  \int_{t_0}^t S(t -t') \N_0 (v+f)(t') dt'
\end{equation*}

\noi is a contraction on
the ball $B_{2\eta}
\subset L_{t, x}^{\frac{2(d+2)}{d}} (I \times \R^d)$ of radius $2\eta > 0$
centered at the origin, 
provided that $\eta > 0$ is sufficiently small.
Noting that 
 the H\"older conjugate of $\frac{2(d+2)}{d}$ is 
 $\frac{2(d+2)}{d+4} = \frac{2(d+2)}{d}/ (1 + \frac 4d)$, 
 it follows from Lemma \ref{LEM:Str} that 
 there exists small $\eta_0 > 0 $ such that 
\begin{align*}
 \|\G v\|_{L_{t,x}^{\frac{2(d+2)}{d}}(I \times \R^d)}
&  \leq
\|S(t-t_0) v_0\|_{L_{t,x}^{\frac{2(d+2)}{d}}(I \times \R^d)}
+  \|\Gamma v - S(t-t_0) v_0\|_{L_{t,x}^{\frac{2(d+2)}{d}}(I \times \R^d)} \notag \\
& \leq \eta + C \bigg( \| v \|_{L_{t,x}^{\frac{2(d+2)}{d}}(I \times \R^d)} + \| f \|_{L_{t,x}^{\frac{2(d+2)}{d}}(I \times \R^d)} \bigg)^{1+\frac{4}{d}} \notag \\
&\leq \eta + C \eta^{1+\frac{4}{d}}
\leq 2 \eta
\end{align*}

\noi
and
\begin{align*}
  \| \G v_1 - \G v_2 \|_{L_{t,x}^{\frac{2(d+2)}{d}}(I \times \R^d)}
  \leq \frac 12
\| v_1 - v_2 \|_{L_{t,x}^{\frac{2(d+2)}{d}}(I \times \R^d)}
\end{align*}

\noi
for any $v, v_1, v_2 \in B_{ 2\eta}$
and $0 < \eta \leq \eta_0$.
Hence,
$\G$ is a contraction on $B_{2\eta}$.	
Furthermore, 
 we have
\begin{align*}
  \| v\|_{L^{\infty}(I;L^2(\R^d))}
 & \leq
\|S(t-t_0)v_0\|_{L^{\infty}(I;L^2(\R^d))}\\
& \hphantom{X}
 + C \bigg( \| v \|_{L_{t,x}^{\frac{2(d+2)}{d}}(I \times \R^d)} + \| f \|_{L_{t,x}^{\frac{2(d+2)}{d}}(I \times \R^d)} \bigg)^{1+\frac{4}{d}} 
\notag \\
& \leq  \|v_0\|_{L^2} + C \eta^{1+\frac{4}{d}}
< \infty
\end{align*}

\noi
for any $v  \in B_{ 2\eta}$.
This shows that $v \in C(I; L^2(\R^d))$.
\end{proof}

Next,  we recall the long-time stability 
result in the mass-critical setting.
See \cite{TVZ} for the proof.

\begin{lemma}[Mass-critical perturbation lemma] \label{LEM:perturb}
Let $I$ be a compact interval.
Suppose that $v \in C(I; L^2(\R^d))$ satisfies the following perturbed NLS:
\begin{align}\label{PNLS1}
i \dt v + \Dl v = |v|^{\frac{4}{d}} v + e,
\end{align}
satisfying
\begin{equation*}
 \|v\|_{L^\infty(I; L^2(\R^d))}  + \| v\|_{L_{t,x}^{\frac{2(d+2)}{d}} (I \times \R^d)} \leq R
\end{equation*}
for some $R \geq 1$.
Then, there exists $\eps_0 = \eps_0(R) > 0$ such that if we have
\begin{align}
\|u_0 - v(t_0) \|_{L^2(\R^d)}
+
\|e\|_{L_{t,x}^{\frac{2(d+2)}{d+4}}(I \times \R^d)} \leq \eps
\label{PP2}
\end{align}

\noi
for
some  $u_0 \in L^2(\R^d)$,
some $t_0 \in I$, and some $\eps < \eps_0$, then
there exists a solution
$u \in C(I; L^2(\R^d))$
to the defocusing mass-critical NLS:
\begin{align}
i \dt u + \Dl u = |u|^{\frac{4}{d}}u
\label{NLS2}
\end{align}

\noi
with  $u(t_0) = u_0$
such that
\begin{align*}
\|u\|_{L^{\infty}(I; L^2(\R^d))}+\|u\|_{L_{t,x}^{\frac{2(d+2)}{d}} (I \times \R^d)} & \leq C_1(R), \notag \\
\|u - v\|_{L^{\infty}(I; L^2(\R^d))} + \| u-v \|_{L_{t,x}^{\frac{2(d+2)}{d}} (I \times \R^d)} & \leq C_1(R)\eps,
\end{align*}

\noi
where $C_1(R)$ is a non-decreasing function of $R$.
\end{lemma}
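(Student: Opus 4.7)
The plan is to partition $I$ into finitely many short subintervals on which $v$ has small space-time norm, then iteratively build $u$ by solving for the difference $w = u - v$ via a contraction argument on each piece.

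\textbf{Step 1 (partition).} Choose a small threshold $\eta_1 = \eta_1(d) > 0$ to be fixed later. Since $\|v\|_{L^{2(d+2)/d}_{t,x}(I\times\R^d)} \le R$, one can split $I = \bigcup_{j=0}^{J-1} I_j$ with $I_j = [t_j, t_{j+1}]$, starting from the given $t_0$, such that
\[
\|v\|_{L^{2(d+2)/d}_{t,x}(I_j \times \R^d)} \le \eta_1
\quad\text{for each } j,
\]
and $J \le (R/\eta_1)^{2(d+2)/d}$ depends only on $R$.

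\textbf{Step 2 (difference equation).} Writing $w = u-v$, the target equation on $I_j$ reads
\[
i\dt w + \Dl w = \N_0(v+w) - \N_0(v) - e,
\qquad w(t_j) \text{ given},
\]
with $\N_0$ as in \eqref{nonlin}. Using the pointwise bound $|\N_0(a+b)-\N_0(a)| \lesssim (|a|^{4/d}+|b|^{4/d})|b|$ and H\"older in the admissible pair $(q,r) = (\frac{2(d+2)}{d},\frac{2(d+2)}{d})$,
\[
\|\N_0(v+w)-\N_0(v)\|_{L^{2(d+2)/(d+4)}_{t,x}(I_j)} \lesssim \Big(\|v\|_{L^{2(d+2)/d}_{t,x}(I_j)}^{4/d} + \|w\|_{L^{2(d+2)/d}_{t,x}(I_j)}^{4/d}\Big) \|w\|_{L^{2(d+2)/d}_{t,x}(I_j)}.
\]

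\textbf{Step 3 (contraction on each piece).} Set
\[
A_j := \|S(t-t_j)w(t_j)\|_{L^{2(d+2)/d}_{t,x}(I_j\times\R^d)} + \|e\|_{L^{2(d+2)/(d+4)}_{t,x}(I_j\times\R^d)}.
\]
Lemma \ref{LEM:Str} together with the estimate in Step 2 shows that, provided $\eta_1$ and $A_j$ are sufficiently small compared to an absolute constant, the Duhamel map for $w$ is a contraction on the ball of radius $2A_j$ in $L^{2(d+2)/d}_{t,x}(I_j\times\R^d)$, yielding a unique $w$ on $I_j$ with
\[
\|w\|_{L^{2(d+2)/d}_{t,x}(I_j)} \le 2A_j,
\qquad
\|w\|_{L^\infty_t L^2_x(I_j)} \le C\big(\|w(t_j)\|_{L^2_x} + A_j\big).
\]

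\textbf{Step 4 (iteration).} By Lemma \ref{LEM:Str}, $\|S(t-t_{j+1})w(t_{j+1})\|_{L^{2(d+2)/d}_{t,x}(I_{j+1})} \lesssim \|w(t_{j+1})\|_{L^2}$, and Step 3 gives $\|w(t_{j+1})\|_{L^2} \le C(\|w(t_j)\|_{L^2}+A_j)$. Hence $A_{j+1} \le C_0 A_j + \|e\|_{L^{2(d+2)/(d+4)}_{t,x}(I_{j+1})}$ and $\|w(t_{j+1})\|_{L^2} \le C_0(\|w(t_j)\|_{L^2}+A_j)$ for a constant $C_0 = C_0(d)$. Starting from $A_0 \le C\eps$ and $\|w(t_0)\|_{L^2}\le\eps$, an induction produces $A_j, \|w(t_j)\|_{L^2} \le (2C_0)^{j}\eps$. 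Choosing $\eps_0(R)>0$ so small that $(2C_0)^J \eps_0$ lies below the threshold required in Step 3 keeps the scheme valid on every subinterval. Gluing the local $w$'s yields $u = v+w \in C(I; L^2(\R^d))$ solving \eqref{NLS2} with $u(t_0)=u_0$, and summing the local bounds gives
\[
\|u-v\|_{L^\infty_t L^2_x(I)} + \|u-v\|_{L^{2(d+2)/d}_{t,x}(I\times\R^d)} \le C_1(R)\eps,
\]
whence also $\|u\|_{L^\infty_t L^2_x(I)}+\|u\|_{L^{2(d+2)/d}_{t,x}(I\times\R^d)} \le C_1(R)$ after adjusting $C_1$.

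The main obstacle is the bookkeeping in Step 4: constants compound geometrically across the $J$ pieces, and $J$ itself is an increasing function of $R$, so $\eps_0(R)$ must absorb a factor of order $(2C_0)^{J(R)}$. A secondary subtlety is that when $d>4$, the map $z\mapsto |z|^{4/d}z$ is only H\"older rather than Lipschitz, but this is harmless here since the H\"older exponent $\frac{4}{d}$ is put on the factor that is forced to be small (either $\|v\|$ on the short subinterval or $\|w\|$ inside the contraction ball), so the standard contraction closes without modification.
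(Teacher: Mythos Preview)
The paper does not prove this lemma; it simply cites \cite{TVZ} for the proof. Your argument is precisely the standard perturbation argument from that reference (partition into pieces where $v$ has small Strichartz norm, run a contraction for the difference $w=u-v$ on each piece, and iterate with geometrically compounding constants absorbed into $\eps_0(R)$), and it is correct as written.

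One small remark: your closing comment about $d>4$ is slightly miscalibrated. The map $z\mapsto |z|^{4/d}z$ is $C^1$ for every $d\ge 1$, so the first-order difference bound $|\N_0(a)-\N_0(b)|\lesssim (|a|^{4/d}+|b|^{4/d})|a-b|$ holds unconditionally and the contraction in Step~3 needs no special care. What fails for $d>4$ is Lipschitz continuity of the \emph{derivative} $\nabla\N_0$, which would only matter if you were tracking an extra derivative (as in the energy-critical case); here it is irrelevant.
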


In the remaining part of this subsection,
we consider  long time existence of
solutions to the perturbed NLS \eqref{ZNLS10} 
under several assumptions.
Given $T>0$,
we assume that
there exist $ C, \ta > 0$  such that
\begin{equation}
 \|f\|_{L_{t,x}^{\frac{2(d+2)}{d}}(I\times \R^d)} \leq C |I|^{\ta}
\label{P0}
 \end{equation}

\noi
for any interval $I \subset [0, T]$.
Then,  Lemma \ref{PROP:LWP2}
guarantees
existence of a solution to the perturbed NLS \eqref{ZNLS10}, 
at least for a short time.
The following proposition establishes long time existence
under some hypotheses.

\begin{proposition}\label{PROP:perturb2}
Given $T>0$, assume the following conditions \textup{(i)} - \textup{(ii)}:
\begin{itemize}
\item[\textup{(i)}]
$f \in 
L_{t, x}^{\frac{2(d+2)}{d}} ([0, T] \times \R^d)$
satisfies \eqref{P0},

\smallskip

\item[\textup{(ii)}]
Given
a solution
$v$  to  \eqref{ZNLS10}, 
 the following a priori $L^2$-bound holds:
\begin{align}
\| v\|_{L^\infty([0, T]; L^2(\R^d))} \leq R
\label{P0a}
\end{align}

\noi
for some $R\geq 1$.
\end{itemize}

\noi
Then,
there exists
$\tau = \tau(R,  \ta)>0$
such that,  given any $t_0 \in [0, T)$,
a unique solution $v$ to~\eqref{ZNLS10} 
exists on $[t_0, t_0 + \tau]\cap [0, T]$.
In particular, the condition \textup{(ii)} guarantees existence 
of a unique solution $v$ to the perturbed NLS \eqref{ZNLS10} on
the entire interval $[0, T]$.
\end{proposition}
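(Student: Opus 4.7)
My plan is to compare the (to-be-constructed) perturbed solution $v$ with the global solution $u$ of the deterministic defocusing mass-critical NLS \eqref{NLS2} starting from the same data at time $t_0$, exploiting the global space-time bound \eqref{bound} together with the mass-critical perturbation lemma (Lemma~\ref{LEM:perturb}). By hypothesis (ii), $\|v(t_0)\|_{L^2} \le R$, so by the global well-posedness / scattering bound \eqref{bound} there exists $u \in L^\infty_t L^2_x(\R\times \R^d)\cap L^{2(d+2)/d}_{t,x}(\R\times\R^d)$ with $u(t_0)=v(t_0)$ and
\[
\|u\|_{L^\infty_t L^2_x(\R\times\R^d)} + \|u\|_{L^{2(d+2)/d}_{t,x}(\R\times\R^d)} \le M(R)
\]
for some $M(R)>0$. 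Let $\eps_0 = \eps_0(M(R))$ be the smallness constant from Lemma~\ref{LEM:perturb} and $\eta_0$ the one from Lemma~\ref{PROP:LWP2}. For $\tau>0$ to be chosen, partition $[t_0,t_0+\tau]\cap[0,T]$ into $N=N(M(R))$ subintervals $I_j=[t_j,t_{j+1}]$, $j=0,\dots,N-1$, each satisfying $\|u\|_{L^{2(d+2)/d}_{t,x}(I_j\times\R^d)}\le \eta_0/4$; crucially, $N$ depends only on $M(R)$, not on $\tau$.

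I then construct $v$ on $[t_0,t_{j+1}]\cap[0,T]$ by induction on $j$, maintaining smallness of $\delta_j := \|v(t_j)-u(t_j)\|_{L^2}$ with $\delta_0=0$. For the inductive step on $I_j$: the Duhamel formula for $u$ together with Lemma~\ref{LEM:Str} yields $\|S(\cdot-t_j)u(t_j)\|_{L^{2(d+2)/d}_{t,x}(I_j)}\le \eta_0/2$, Strichartz also gives $\|S(\cdot-t_j)(v(t_j)-u(t_j))\|_{L^{2(d+2)/d}_{t,x}(I_j)}\lesssim \delta_j$, and hypothesis (i) yields $\|f\|_{L^{2(d+2)/d}_{t,x}(I_j)}\le C(\tau/N)^\ta$. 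With $\delta_j$ and $\tau$ small enough, Lemma~\ref{PROP:LWP2} produces $v$ on $I_j$ with $\|v\|_{L^{2(d+2)/d}_{t,x}(I_j)}\lesssim \eta_0$. Writing $\N_0(v+f)=\N_0(v)+e$ with $e:=\N_0(v+f)-\N_0(v)$, the pointwise bound $|e|\lesssim |f|(|v|+|f|)^{4/d}$ and H\"older give
\[
\|e\|_{L^{2(d+2)/(d+4)}_{t,x}(I_j)} \lesssim \|f\|_{L^{2(d+2)/d}_{t,x}(I_j)}\bigl(\|v\|_{L^{2(d+2)/d}_{t,x}(I_j)}+\|f\|_{L^{2(d+2)/d}_{t,x}(I_j)}\bigr)^{4/d}\le C'(R)(\tau/N)^\ta.
\]
Applying Lemma~\ref{LEM:perturb} on $I_j$ with initial data $u_0=u(t_j)$ (so that, by NLS uniqueness, the output is exactly $u|_{I_j}$) gives $\|v-u\|_{L^\infty_t L^2_x\cap L^{2(d+2)/d}_{t,x}(I_j\times\R^d)}\le C_1(2R)\bigl(\delta_j+C'(R)(\tau/N)^\ta\bigr)$, hence the recursion $\delta_{j+1}\le C_1(2R)\bigl(\delta_j+C'(R)(\tau/N)^\ta\bigr)$.

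Iterating the recursion with $\delta_0=0$ gives $\delta_j\le C_1(2R)^N\cdot C'(R)(\tau/N)^\ta$ for all $j\le N$, so picking $\tau=\tau(R,\ta)>0$ small enough forces $\delta_j \le \eps_0/2$ and $C'(R)(\tau/N)^\ta\le \eps_0/2$ uniformly in $j$; this closes the induction and produces $v$ on $[t_0,t_0+\tau]\cap[0,T]$. Iterating the same argument over consecutive intervals of length $\tau$ (each time using hypothesis (ii) to control the new data in $L^2$) covers all of $[0,T]$ in finitely many steps, which gives the final assertion; uniqueness is inherited from Lemma~\ref{PROP:LWP2} by concatenation. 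The main obstacle is a bootstrap issue: the estimate on $\|e\|_{L^{2(d+2)/(d+4)}_{t,x}(I_j)}$ needs a priori control of $\|v\|_{L^{2(d+2)/d}_{t,x}(I_j)}$ \emph{before} the perturbation lemma is invoked. I resolve this by first applying Lemma~\ref{PROP:LWP2} on $I_j$ (which directly furnishes the required space-time bound on $v$) and only afterwards comparing $v$ with the reference solution $u$ via Lemma~\ref{LEM:perturb}; since $N$ depends only on $R$ and not on $\tau$, the $N$-fold accumulation of the multiplicative factor $C_1(2R)$ is absorbed by choosing $\tau$ small in terms of $R$ and $\ta$ alone.
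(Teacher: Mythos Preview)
Your proof is correct and follows essentially the same approach as the paper's: both compare $v$ with the global deterministic NLS solution sharing the same data at $t_0$ (the paper calls it $w$), partition into subintervals on which this reference solution has small Strichartz norm, first invoke Lemma~\ref{PROP:LWP2} to construct $v$ and control its Strichartz norm on each subinterval, then apply Lemma~\ref{LEM:perturb} to propagate the closeness $\|v-u\|$, iterating with $\tau$ chosen small depending only on $R$ and $\ta$. One minor slip: since the $I_j$ are determined by the smallness condition on $u$ rather than by equal length, you only know $|I_j|\le \tau$, not $|I_j|=\tau/N$, so \eqref{P0} gives $\|f\|_{L^{2(d+2)/d}_{t,x}(I_j)}\le C\tau^\ta$ rather than $C(\tau/N)^\ta$; this is harmless because $N=N(R)$ is fixed and the recursion $\delta_{j+1}\le C_1(2R)(\delta_j+C'(R)\tau^\ta)$ still closes by choosing $\tau=\tau(R,\ta)$ small.
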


\begin{proof}
By setting $e = \N_0(v+f)  - \N_0(v)$, 
the equation \eqref{ZNLS10}
reduces to \eqref{PNLS1}.
In the following, we iteratively apply Lemma \ref{LEM:perturb}
on short intervals and
 show that there exists $\tau = \tau(R,   \ta) > 0$ such that
\eqref{PNLS1} is well-posed on $[t_0, t_0 + \tau] \cap [0, T] $ for any $t_0 \in [0, T)$.

Let $w$ be the global solution to the defocusing mass-critical NLS \eqref{NLS2} 
with $w(t_0) = v(t_0) = v_0$.
By the assumption \eqref{P0a}, we  have $\|w(t_0) \|_{L^2} \leq R$.
Then, by the space-time bound \eqref{bound}, we have 
\[\| w\|_{L^{\frac{2(d+2)}{d}}_{t, x}(\R \times \R^d)} \leq  C(R) < \infty.\]

\noi
Given small $\eta > 0$ (to be chosen later),
we  divide the interval $ [t_0, T]$
into $J = J(R,  \eta)\sim \big(C(R) /\eta\big)^\frac{2(d+2)}{d}$  many subintervals $I_j = [t_j, t_{j+1}]$
such that
\begin{align}
\|w\|_{L_{t,x}^{\frac{2(d+2)}{d}}(I_j \times \R^d)} \leq \eta.
\label{A1}
\end{align}

\noi
We point out that  $\eta$ will be  chosen as an absolute constant
and hence dependence of other constants on $\eta$ is not essential in the following.
Fix $\tau > 0$ (to be chosen later in terms of $R$ and $\ta$)
and write $[t_0, t_0+\tau] = \bigcup_{j = 0}^{J'} \big([t_0, t_0+\tau]\cap I_j\big)$
for some $J'  \leq J - 1$, where $[t_0, t_0+\tau]\cap I_j \ne \emptyset$ for $0 \leq j \leq J'$
and $[t_0, t_0+\tau]\cap I_j=\emptyset$ for $j > J'$.

Since the nonlinear evolution $w$ is small on each $I_j$,
it follows that the linear evolution $S(t-t_j) w(t_j)$ is also small on each $I_j$.
Indeed, from the Duhamel formula, we have
\[S(t-t_j) w(t_j) = w(t) - i \int_{t_j}^t S(t - t') \N_0(w)(t') dt'.\]

\noi
Then, by Lemma \ref{LEM:Str} and \eqref{A1}, we have 
\begin{align}
\|S(t-t_j) w(t_j) \|_{L_{t,x}^{\frac{2(d+2)}{d}}(I_j \times \R^d)}
&\leq \|w\|_{L_{t,x}^{\frac{2(d+2)}{d}}(I_j \times \R^d)} 
+ C  \|w\|_{L_{t,x}^{\frac{2(d+2)}{d}}(I_j \times \R^d)}^{1+ \frac{4}{d}}\notag  \\
&\leq \eta + C  \eta^{1+\frac{4}{d}}\notag \\
& \leq 2 \eta 
\label{P2}
\end{align}
	
\noi
for all $j = 0, \dots, J-1$,
provided that $\eta > 0$ is sufficiently small.

Now, we estimate $v$ on the first interval $I_0$.
By $v(t_0)=w(t_0)$ and \eqref{P2}, we have
\begin{align*}
\|S(t-t_0) v(t_0) \|_{L_{t,x}^{\frac{2(d+2)}{d}}(I_0 \times \R^d)}
= \|S(t-t_0) w(t_0) \|_{L_{t,x}^{\frac{2(d+2)}{d}}(I_0 \times \R^d)}
\leq 2 \eta.
\end{align*}
Let $\eta_0 > 0$ be  as in
Lemma \ref{PROP:LWP2}.
Then,
by the local theory (Lemma \ref{PROP:LWP2}),
we have
\begin{align*}
\|v\|_{L_{t,x}^{\frac{2(d+2)}{d}} (I_0 \times \R^d)}
\le 6 \eta,
\end{align*}

\noi
as long as
$3\eta < \eta_0$
and
 $\tau = \tau(\eta, \ta) = \tau(\ta)> 0$ is sufficiently small
so that
\begin{align}
\|f\|_{L_{t,x}^{\frac{2(d+2)}{d}} ([t_0, t_0+ \tau))} \leq C\tau^\ta \leq \eta.
\label{P4a}
\end{align}

Next, we estimate the error term.
By Lemma \ref{LEM:Str} and \eqref{P0}, we have
\begin{align}
\|e\|_{L_{t,x}^{\frac{2(d+2)}{d+4}}(I_0 \times \R^d)}
&\leq C \bigg( \| v \|_{L_{t,x}^{\frac{2(d+2)}{d}} (I_0 \times \R^d)} + \| f \|_{L_{t,x}^{\frac{2(d+2)}{d}} (I_0 \times \R^d)} \bigg)^{\frac{4}{d}} \| f \|_{L_{t,x}^{\frac{2(d+2)}{d}} (I_0 \times \R^d)} \notag \\
&\leq C \Big(\eta + \tau^\ta\Big)^\frac{4}{d}\tau^{\ta }\notag \\
&\leq C \tau^{\ta }
\label{P4b}
\end{align}

\noi
for any small $\eta, \tau > 0$.
Given $\eps > 0$, we can choose
 $\tau = \tau(  \eps, \ta)>0$ sufficiently small
 so that
\begin{align*}
\|e\|_{L_{t,x}^{\frac{2(d+2)}{d+4}}(I_0 \times \R^d)}
\leq  \eps.
\end{align*}

\noi
In particular, for $\eps < \eps_0$ with $\eps_0= \eps_0(R) > 0$
dictated by Lemma \ref{LEM:perturb},
the condition \eqref{PP2}
is satisfied on $I_0$.
Hence, by the perturbation lemma (Lemma \ref{LEM:perturb}), 
 we obtain
\begin{align*}
\| w - v\|_{L^{\infty} (I_0; L^2(\R^d))} + \| w-v \|_{L_{t,x}^{\frac{2(d+2)}{d}} (I_0 \times \R^d)} \leq C_1(R) \eps.
\end{align*}
	
\noi
In particular, we have
\begin{align}
\|w(t_1) - v(t_1) \|_{L^2(\R^d)}
\leq C_1(R) \eps.
\label{P6}
\end{align}

We now move onto the second interval $I_1$.
By   \eqref{P2} and Lemma \ref{LEM:Str} with \eqref{P6},
we have
\begin{align}
&\|S(t - t_1) v(t_1)\|_{L_{t,x}^{\frac{2(d+2)}{d}}(I_1 \times \R^d)} \notag \\
&\leq \|S(t-t_1)w(t_1)\|_{L_{t,x}^{\frac{2(d+2)}{d}}(I_1 \times \R^d)} + \|S(t-t_1)(w(t_1)-v(t_1))\|_{L_{t,x}^{\frac{2(d+2)}{d}}(I_1 \times \R^d)} \notag \\
&\leq 2 \eta + C_0\cdot C_1(R) \eps
\leq 3\eta
\label{A3}
\end{align}

\noi
by choosing $\eps = \eps(R, \eta) = \eps(R) > 0$ sufficiently small.
Proceeding as before,
it follows from
Lemma~\ref{PROP:LWP2}
with  \eqref{A3} that
\begin{align*}
\|v\|_{L_{t,x}^{\frac{2(d+2)}{d}} (I_1 \times \R^d)}
\leq 8 \eta,
\end{align*}

\noi
as long as $4\eta \leq \eta_0$
and
 $\tau > 0$ is sufficiently small
so that \eqref{P4a} is satisfied.
By repeating the computation in  \eqref{P4b} 
with  \eqref{P0}, we have
\begin{align*}
\|e\|_{L_{t,x}^{\frac{2(d+2)}{d+4}}(I_1 \times \R^d)}
\leq  C \tau^{\ta}\leq  \eps
\end{align*}

\noi
by choosing
 $\tau = \tau(\eps, \ta)>0$ sufficiently small.
Hence, by the perturbation lemma (Lemma~\ref{LEM:perturb}) applied
to the second interval $I_1$, 
we obtain
\begin{align*}
\| w - v\|_{L^{\infty}(I_1; L^2(\R^d))}  + \| w - v\|_{L_{t,x}^{\frac{2(d+2)}{d}}(I_1 \times \R^d)} 
\leq C_1(R) (C_1(R) + 1)\eps.
\end{align*}

\noi
provided that $\tau = \tau (\eps, \ta )>0$ is chosen sufficiently small
and that
$ (C_1(R) + 1)\eps <\eps_0$.
In particular, we have
\begin{align*}
\|w(t_2) - v(t_2) \|_{L^2(\R^d)}
\leq C_1(R)(C_1(R) + 1)\eps =: C_2(R)\eps.
\end{align*}

For $j \geq 2$, 
define 
$C_j(R)$  recursively by setting
\begin{align*}
C_j(R) = C_1(R)(C_{j-1}(R) + 1).
\end{align*}

\noi
Then,  proceeding inductively, 
we  obtain
\begin{align*}
 \|w(t_j) - v(t_j) \|_{L^2(\R^d)}
\leq C_j(R) \eps,
\end{align*}

\noi
for all $0 \leq j \leq J'$,
as long as 
 $\eps  = \eps(R, \eta, J) >0$ is sufficiently small
such that
\begin{itemize}
\item
  $C_0\cdot C_j(R) \eps \leq  \eta$
(here, $C_0$ is the constant from the Strichartz estimate in \eqref{A3}), 

\smallskip

\item
$(C_j(R)+1) \eps < \eps_0$,
\end{itemize}

\noi
for $j = 1, \dots, J'$.
Recalling that $J'+1 \leq J = J(R,  \eta)$,
we see that this can be achieved
by choosing  small $\eta >0$,
$\eps = \eps(R, \eta)  = \eps(R)> 0$,
and $\tau = \tau(\eps, \ta) = \tau(R, \ta) >0$ sufficiently small.
This guarantees existence of a (unique) solution $v$ to \eqref{ZNLS10} on $[t_0, t_0+\tau]$.
Lastly, noting that 
 $\tau > 0$ 
is independent of $t_0 \in [0, T)$, 
we conclude existence of the solution 
$v$ to~\eqref{ZNLS10} 
on the entire interval $[0, T]$.
\end{proof}

\subsection{Proof of Theorem \ref{THM:GWP}\,(i)}
\label{SUBSEC:mass2}

We are now ready to present a proof of Theorem \ref{THM:GWP}\,(i).
Given a local-in-time solution $u$ to \eqref{SNLS}, 
let  $v = u - \Psi$.
Then, $v$ satisfies 
\begin{equation}
\begin{cases}
i \partial_t v +  \Delta v 
=  \N_0(v+ \Psi)\\
v|_{t = 0} = u_0.
\end{cases}
\label{SNLS4}
\end{equation}

\noi
Theorem \ref{THM:GWP}\,(i)
follows from applying  Proposition \ref{PROP:perturb2}
to \eqref{SNLS4} with $f = \Psi$, 
once we verify the hypotheses (i) and (ii).

Fix $T > 0$. From Lemma \ref{LEM:bound}
and Markov's inequality, 
we have the following almost sure a priori bound:
\begin{align}
 \sup_{0\le t \le T} M(u(t)) \leq C 
 \big(\o, T, M(u_0),  \| \phi \|_{\HS (L^2; L^2)}\big) < \infty
\label{A4}
\end{align}

\noi
for a solution $u$ to \eqref{SNLS} with $p = 1 + \frac 4d$.
Then, from \eqref{A4} and Lemma \ref{LEM:sconv}\,(i), we obtain
\begin{align*}
 \sup_{0\le t \le T}M(v(t)) & =  \sup_{0\le t \le T}M(u(t) - \Psi(t))
\les  \sup_{0\le t \le T}M(u(t)) +  \sup_{0\le t \le T} M(\Psi(t))\\
& \leq C 
 \big(\o, T, M(u_0),  \| \phi \|_{\HS (L^2; L^2)}\big)< \infty
\end{align*}

\noi
almost surely.
This shows that the hypothesis (ii) in   Proposition \ref{PROP:perturb2}
holds almost surely for some almost surely finite $R = R(\o) \geq 1$.
The hypothesis (i) in   Proposition \ref{PROP:perturb2}
easily follows from 
H\"older's inequality in time, Markov's inequality, and 
Lemma \ref{LEM:sconv}\,(ii).
More precisely, 
by fixing finite $q >\frac{2(d+2)}{d}$
and noting 
$\frac{2(d+2)}{d} \leq \frac{2d}{d-2}$
for $d \geq 3$, 
Lemma \ref{LEM:sconv}\,(ii) yields
\[
\E \bigg[  \| \Psi \|_{L^q([0, T]; L^\frac{2(d+2)}{d}(\R^d))} \bigg] 
\le C \| \phi \|_{\HS(L^2;L^2)}.
\]

\noi
Then,  Markov's inequality yields
\begin{align}
\| \Psi \|_{L^q([0, T]; L^\frac{2(d+2)}{d}(\R^d))}
\leq C 
 \big(\o,   \| \phi \|_{\HS (L^2; L^2)}\big) < \infty, 
\label{Z1}
\end{align}

\noi
which in turn implies
$\Psi \in 
L_{t, x}^{\frac{2(d+2)}{d}} ([0, T] \times \R^d)$
almost surely.
Moreover, 
it follows
from~\eqref{Z1} and  H\"older's inequality in time 
that 
\[
\| \Psi \|_{L_{t, x}^{\frac{2(d+2)}{d}} (I \times \R^d)}
\leq |I|^\ta
\| \Psi \|_{L^q(I; L^\frac{2(d+2)}{d}(\R^d))}
\leq
C 
 \big(\o,   \| \phi \|_{\HS (L^2; L^2)}\big) |I|^\ta
\]

\noi
for any 
 interval $I \subset [0, T]$, 
 where $\ta = \frac{d}{2(d+2)} - \frac 1q  > 0$.
 This verifies
\eqref{P0}.

Hence, by applying 
  Proposition \ref{PROP:perturb2}, we can construct a solution $v$ to \eqref{SNLS4}
  on $[0, T]$.
  Since the choice of $T>0$ was arbitrary, 
this proves Theorem~\ref{THM:GWP}\,(i).

\section{Energy-critical case}

In this section, we prove global well-posedness
of  the defocusing energy-critical SNLS~\eqref{SNLS} (Theorem \ref{THM:GWP}\,(ii)).
The idea is to follow the argument for the mass-critical case presented in Section \ref{SEC:mass}.
Namely, we study the following 
 defocusing
 energy-critical 
  NLS with a deterministic perturbation:
\begin{equation} \label{ZNLS11}
i \dt v + \Dl v = \N_1 (v+f),
\end{equation}

\noi
where $\N_1$ is as in \eqref{nonlin} and $f$ is a given deterministic function,
 satisfying certain regularity conditions.

Let $q_d$ and $r_d$ be as in 
\eqref{adqr}
and set $\rho_d := \frac{2d^2}{(d-2)^2}$ for $d \ge 3$.
 A direct calculation shows that
\begin{equation} \label{est:energy1}
\frac{d+2}{d-2} \frac 1{q_d} = \frac{1}{q_d'}, \quad
\frac{1}{r_d'} = \frac{1}{r_d} + \frac{4}{d-2} \frac{1}{\rho_d}, \quad
\text{and}\quad
W^{1,r_d}(\R^d) \hookrightarrow L^{\rho_d}(\R^d).
\end{equation}

\subsection{Energy-critical   NLS with a perturbation}
\label{SUBSEC:energy1}

We first go over the local theory for the perturbed NLS \eqref{ZNLS11}
in the energy-critical case.

\begin{lemma}
\label{PROP:LWP2e}
Let $3 \le d \le 6$.
Then, there exists  small $\eta_0 = \eta_0>0$ such that
if
\begin{align}
 \|S(t-t_0)v_0\|_{L^{q_d} (I; W^{1,r_d} (\R^d))} + \|f\|_{L^{q_d} (I; W^{1,r_d} (\R^d))} \leq \eta
\label{Y1}
 \end{align}

\noi
for some $\eta \leq \eta_0$ and
some time interval  $I = [t_0, t_1] \subset \R$,
then there exists a unique solution $v \in C(I; H^1(\R^d))\cap L^{q_d} (I; W^{1,r_d}(\R^d))  $
to \eqref{ZNLS11}
with  $v(t_0) = v_0 \in H^1(\R^d)$.
Moreover, we have
\begin{align*}
\|v\|_{L^{q_d}(I; W^{1,r_d}(\R^d))} \le 2 \eta.
\end{align*}
\end{lemma}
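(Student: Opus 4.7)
The plan is to replicate the Picard iteration argument of Lemma \ref{PROP:LWP2}, but now in the resolution space $X_I := L^{q_d}(I; W^{1,r_d}(\R^d))$ adapted to the $\dot H^1$ critical scaling. Define
\begin{equation*}
\G v(t) := S(t-t_0) v_0 - i \int_{t_0}^{t} S(t-t') \N_1(v+f)(t') \, dt',
\end{equation*}
and show that, for $\eta_0>0$ sufficiently small, $\G$ is a contraction on the closed ball $B_{2\eta}\subset X_I$ of radius $2\eta$ centered at the origin. The admissible pair $(q_d, r_d)$ from \eqref{adqr} is chosen precisely so that the Strichartz estimate of Lemma \ref{LEM:Str} gives
\begin{equation*}
\|\G v - S(t-t_0) v_0\|_{X_I} \les \|\N_1(v+f)\|_{L^{q_d'}(I;W^{1,r_d'}(\R^d))},
\end{equation*}
after which the algebraic identities in \eqref{est:energy1} are used to match the nonlinearity back to $X_I$ via H\"older in space, Sobolev embedding, and H\"older in time.

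The heart of the argument is the nonlinear estimate. Using the pointwise bound $|\nabla \N_1(u)| \les |u|^{\frac{4}{d-2}} |\nabla u|$ together with H\"older in $x$ (with exponents given by $\tfrac{1}{r_d'} = \tfrac{1}{r_d} + \tfrac{4}{d-2}\tfrac{1}{\rho_d}$), one gets
\begin{equation*}
\|\N_1(u)\|_{W^{1,r_d'}} \les \|u\|_{L^{\rho_d}}^{\frac{4}{d-2}} \|u\|_{W^{1,r_d}} \les \|u\|_{W^{1,r_d}}^{\frac{d+2}{d-2}},
\end{equation*}
invoking the Sobolev embedding $W^{1,r_d}\hookrightarrow L^{\rho_d}$. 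Taking $L^{q_d'}_t$ and applying H\"older in time with $\tfrac{d+2}{d-2}\tfrac{1}{q_d}=\tfrac{1}{q_d'}$ yields
\begin{equation*}
\|\N_1(v+f)\|_{L^{q_d'}(I;W^{1,r_d'})} \les \bigl(\|v\|_{X_I}+\|f\|_{X_I}\bigr)^{\frac{d+2}{d-2}}.
\end{equation*}
Combined with the hypothesis \eqref{Y1}, this gives $\|\G v\|_{X_I} \le \eta + C\eta^{\frac{d+2}{d-2}} \le 2\eta$ once $\eta\le \eta_0$ with $\eta_0$ small. Continuity in $H^1$ of the resulting fixed point then follows from the second assertion of Lemma \ref{LEM:Str} applied to the admissible pair $(\infty, 2)$.

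The main obstacle is the difference estimate needed for the contraction: one must bound $\nabla\bigl(\N_1(v_1+f)-\N_1(v_2+f)\bigr)$ in $L^{r_d'}$. Writing the difference as a telescoping sum, the delicate term is $\bigl(|u_1|^{\frac{4}{d-2}}-|u_2|^{\frac{4}{d-2}}\bigr)\nabla u_2$, which requires the Lipschitz-type bound
\begin{equation*}
\bigl||u_1|^{\frac{4}{d-2}}-|u_2|^{\frac{4}{d-2}}\bigr| \les \bigl(|u_1|^{\frac{4}{d-2}-1}+|u_2|^{\frac{4}{d-2}-1}\bigr)|u_1-u_2|.
\end{equation*}
This is precisely where the restriction $d\le 6$ enters, since it guarantees $\frac{4}{d-2}\ge 1$ so that the exponent $\frac{4}{d-2}-1$ is nonnegative; the same H\"older/Sobolev scheme as above then closes the difference estimate with a factor of $\frac{1}{2}$, completing the contraction argument.
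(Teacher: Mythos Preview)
Your proposal is correct and follows essentially the same approach as the paper: both run a contraction argument on $B_{2\eta}\subset L^{q_d}(I;W^{1,r_d})$ via the Strichartz estimate and the exponent relations \eqref{est:energy1}, and both isolate the Lipschitz bound on $\nabla\N_1$ (your term $\bigl(|u_1|^{\frac{4}{d-2}}-|u_2|^{\frac{4}{d-2}}\bigr)\nabla u_2$ is exactly the second term of the paper's \eqref{Y2}) as the place where the restriction $d\le 6$ enters. The paper writes out the difference estimate \eqref{est:Gdiffe} in slightly more detail, but the structure is identical.
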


\begin{proof}
We show that the map $\G$ defined by
\begin{equation*}
\G v(t) := S(t-t_0) v_0 -i  \int_{t_0}^t S(t -t') \N_1 (v+f)(t') dt'
\end{equation*}

\noi is a contraction on
$B_{2\eta} \subset L^{q_d} (I; W^{1,r_d}(\R^d))  $
of radius $2\eta > 0$ centered at the origin, 
provided that $\eta > 0$ is sufficiently small.
It   follows from Lemma \ref{LEM:Str} 
and \eqref{est:energy1} with \eqref{Y1}
that 
 there exists small $\eta_0 > 0 $ such that 
\begin{align*}
  \|\G v\|_{L^{q_d}(I ; W^{1,r_d}(\R^d))}
&  \leq
\|S(t-t_0) v_0\|_{L^{q_d}(I ; W^{1,r_d}(\R^d))}
+ C \| \N_1 (v+f) \|_{L^{q_d'}(I; W^{1,r_d'}(\R^d))} \notag \\
& \leq \eta+ C \bigg( \| v \|_{L^{q_d}(I; W^{1,r_d}(\R^d))} + \| f \|_{L^{q_d}(I; W^{1,r_d}(\R^d))} \bigg)^{1+ \frac 4{d-2}} \notag \\
&\leq \eta  + C \eta^{1+\frac{4}{d-2}}
\leq 2 \eta
\end{align*}
for $v \in B_{2\eta}$
and $0 < \eta \leq \eta_0$.
Recall that 
 $\nabla \N_1$ is Lipschitz continuous when $3\leq d \le 6$ 
 and we have
 \begin{align}
\nb N_1(u_1)  - \nb \N_1(u_2)
& = O\big(|u_1|^\frac{4}{d-2}+ |u_1|^\frac{4}{d-2}\big) \nb(u_1 - u_2)\notag\\
& \hphantom{X}
+ O\big((|u_1|^\frac{6-d}{d-2} + |u_1|^\frac{6-d}{d-2})|u_1 - u_2|\big) \nb u_2.
 \label{Y2}
 \end{align}
 
 \noi
 See, for example, Case 4 in the proof of Proposition 4.1 in \cite{OOP}.
 Then, proceeding as above with \eqref{Y2},  
  we have
\begin{align}
 \| \G & v_1 - \G v_2 \|_{L^{q_d}(I ; W^{1,r_d}(\R^d))} \notag \\
& \leq C \| \N_1 (v_1+f) - \N_1 (v_2+f) \|_{L^{q_d'}(I; W^{1,r_d'}(\R^d))} \notag \\
& \leq C \bigg( \| v_1 \|_{L^{q_d}(I; L^{\rho_d}(\R^d))} + \| v_2 \|_{L^{q_d}(I; L^{\rho_d}(\R^d))}
 + \| f \|_{L^{q_d}(I; L^{\rho_d}(\R^d))} \bigg)^{\frac{4}{d-2}} \notag \\
&\qquad \times \| v_1-v_2 \|_{L^{q_d}(I; W^{1,r_d}(\R^d))} \notag \\
&\quad + C \bigg( \| v_1 \|_{L^{q_d}(I; W^{1,r_d}(\R^d))} 
+ \| v_2 \|_{L^{q_d}(I; W^{1,r_d}(\R^d))} + \| f \|_{L^{q_d}(I; W^{1,r_d}(\R^d))} \bigg) \notag \\
&\qquad \times \bigg( \| v_1 \|_{L^{q_d}(I; L^{\rho_d}(\R^d))} 
+ \| v_2 \|_{L^{q_d}(I_0; L^{\rho_d}(\R^d))} + \| f \|_{L^{q_d}(I; L^{\rho_d}(\R^d))} \bigg)^{\frac{6-d}{d-2}} \notag \\
&\qquad \times \| v_1-v_2 \|_{L^{q_d} (I; L^{\rho_d}(\R^d))} \notag \\
&\leq C \eta^{\frac{4}{d-2}} \| v_1-v_2 \|_{L^{q_d}(I; W^{1,r_d}(\R^d))} \notag \\
&  \leq \frac 12
\| v_1 - v_2 \|_{L^{q_d}(I; W^{1,r_d}(\R^d))} \label{est:Gdiffe}
\end{align}
for $v_1, v_2 \in B_{ 2\eta}$ and $ 0 < \eta \leq \eta_0$.
Hence,
$\G$ is a contraction on $B_{2 \eta}$.
Furthermore, 
 we have
\begin{align*}
\| v\|_{L^{\infty}(I; H^1(\R^d))}
&\leq
 \|S(t-t_0)v_0\|_{L^{\infty}(I;H^1(\R^d))}
 + C \| \N_1 (v+f) \|_{L^{q_d'}(I; W^{1,r_d'}(\R^d))} \notag \\
& \leq \|v_0\|_{H^1} + C \eta^{1+\frac{4}{d-2}}
< \infty
\end{align*}

\noi
for $v \in B_{2\eta}$.
This shows that $v \in C(I; H^1(\R^d))$.
\end{proof}

\begin{remark} \label{REM:LWPe}\rm
The restriction $d \leq 6$ appears in 
\eqref{Y2} and  \eqref{est:Gdiffe}, 
where we used  the Lipschitz continuity of $\nabla \N_1$. 
Following  the  argument  in \cite{CW}, 
we can remove this restriction
and construct a solution 
 by carrying out a  contraction argument on  $B_{2\eta}
\subset L^{q_d} (I; W^{1,r_d}(\R^d))  $
equipped with the distance
\[ d(v_1, v_2) = 
\|v_1 - v_2\|_{L^{q_d}( I; L^{r_d}(\R^d))}.\]

\noi
Indeed, a slight modification of  the computation in \eqref{est:Gdiffe}
shows
$d(\G v_1,  \G v_2)
\leq \frac 12 d(v_1, v_2)$
for any $v_1, v_2 \in B_{2\eta}$.

\end{remark}

Next, we state 
 the long-time stability 
result in the energy-critical setting.
 See \cite{CKSTT,  TV, TVZ, KV}.
The following lemma is stated
in terms of  non-homogeneous spaces, 
the proof follows closely to that in the mass-critical case.

\begin{lemma}[Energy-critical perturbation lemma] \label{LEM:perturbe}
Let $3 \le d \le 6$ and $I$ be a compact interval.
Suppose that $v \in C(I; H^1(\R^d))$ satisfies the following perturbed NLS:
\begin{align*}
i \dt v + \Dl v = |v|^{\frac{4}{d-2}} v + e,
\end{align*}
satisfying
\begin{equation*}
\|v\|_{L^\infty(I; H^1(\R^d))} + \| v\|_{L^{q_d}(I; W^{1,r_d} (\R^d))}   \leq R
\end{equation*}
for some $R \geq 1$.
Then, there exists $\eps_0 = \eps_0(R) > 0$ such that if we have
\begin{align*}
\|u_0 - v(t_0) \|_{H^1(\R^d)}
+
\| e\|_{L^{q_d'}(I; W^{1,r_d'} (\R^d))} \leq \eps
\end{align*}

\noi
for
some  $u_0 \in H^1(\R^d)$,
some $t_0 \in I$, and some $\eps < \eps_0$, then
there exists a solution
$u \in C(I; H^1(\R^d))$
to the defocusing energy-critical NLS:
\begin{align*}
i \dt u + \Dl u = |u|^{\frac{4}{d-2}}u 
\end{align*}

\noi
with $u(t_0) = u_0$
such that
\begin{align*}
\|u\|_{L^{\infty}(I; H^1(\R^d))}+\|u\|_{L^{q_d}(I; W^{1,r_d} (\R^d))} & \leq C_1(R), \notag \\
\|u - v\|_{L^{\infty}(I; H^1(\R^d))} + \| u-v \|_{L^{q_d}(I; W^{1,r_d} (\R^d))} & \leq C_1(R)\eps,
\end{align*}

\noi
where $C_1(R)$ is a non-decreasing function of $R$.
\end{lemma}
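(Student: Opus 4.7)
The plan is to adapt the standard perturbation argument used in the energy-critical setting (see, e.g., \cite{CKSTT, TV, TVZ, KV}), paralleling the structure of Lemma \ref{LEM:perturb} but with the admissible pair $(q_d,r_d)$ and the $W^{1,r_d}$-based Strichartz norms in place of their $L^2$ counterparts. Setting $w := u - v$, the difference formally satisfies
\[
w(t) = S(t-t_0)(u_0 - v(t_0)) - i\int_{t_0}^t S(t-t')\bigl(\mathcal{N}_1(v+w) - \mathcal{N}_1(v) - e\bigr)(t')\,dt',
\]
so $u = v + w$ will be a solution of the unperturbed NLS. First I would partition $I = \bigcup_{j=0}^{J-1} I_j$ into $J = J(R,\eta)$ consecutive subintervals $I_j = [t_j, t_{j+1}]$ on which $\|v\|_{L^{q_d}(I_j; W^{1,r_d})} \le \eta$, for a small absolute constant $\eta$ to be fixed; this is possible by the hypothesis $\|v\|_{L^{q_d}(I;W^{1,r_d})}\le R$.

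On a fixed subinterval $I_j$, I would apply the Strichartz estimates (Lemma \ref{LEM:Str}) with the admissible pair $(q_d,r_d)$, combined with the pointwise identity \eqref{Y2} and the relations \eqref{est:energy1}, to obtain
\[
\|\mathcal{N}_1(v+w) - \mathcal{N}_1(v)\|_{L^{q_d'}(I_j;W^{1,r_d'})} \les \Bigl(\|v\|_{L^{q_d}(I_j;W^{1,r_d})}^{\frac{4}{d-2}} + \|w\|_{L^{q_d}(I_j;W^{1,r_d})}^{\frac{4}{d-2}}\Bigr)\|w\|_{L^{q_d}(I_j;W^{1,r_d})},
\]
exactly as in the contraction estimate \eqref{est:Gdiffe}. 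Setting $A_j := \|w(t_j)\|_{H^1}$ and absorbing the $w$-contributions via a continuity/bootstrap argument (using smallness of $\eta$), this produces, for $A_j$ and $\|e\|_{L^{q_d'}(I_j;W^{1,r_d'})}$ sufficiently small relative to $\eta$, the estimate
\[
\|w\|_{L^\infty(I_j;H^1)} + \|w\|_{L^{q_d}(I_j;W^{1,r_d})} \le C_0\bigl(A_j + \|e\|_{L^{q_d'}(I_j;W^{1,r_d'})}\bigr),
\]
and in particular $A_{j+1} \le C_0\bigl(A_j + \|e\|_{L^{q_d'}(I_j;W^{1,r_d'})}\bigr)$.

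Iterating this bound over $j=0,\dots,J-1$ yields $A_j \le (2C_0)^j\eps$, and summing over subintervals produces the global bound $\|w\|_{L^\infty(I;H^1)} + \|w\|_{L^{q_d}(I;W^{1,r_d})} \le C_1(R)\eps$, hence $\|u\|_{L^\infty(I;H^1)} + \|u\|_{L^{q_d}(I;W^{1,r_d})} \le R + C_1(R)\eps \le C_1(R)$ after redefining $C_1(R)$. Continuity $u \in C(I;H^1)$ then follows from the continuity statement at the end of Lemma \ref{LEM:Str} applied to the Duhamel formulation for $u$. The main obstacle is that $C_1(R)$ inherits an exponential dependence on the number $J = J(R,\eta)$ of subintervals, so $\eps_0(R)$ must be chosen so small that $(2C_0)^J \eps_0$ stays below the threshold required for the bootstrap to close on every subinterval; this is standard but requires careful tracking. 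The genuinely delicate step is the $W^{1,r_d'}$-estimate for the nonlinear difference, which must be split into two contributions following \eqref{Y2}---the first controlled by $L^{q_d}(W^{1,r_d})$ alone, the second by the Sobolev embedding $W^{1,r_d}\hookrightarrow L^{\rho_d}$ from \eqref{est:energy1}---and this is precisely where the restriction $d \le 6$, ensuring the nonnegativity of the exponent $\tfrac{6-d}{d-2}$ in \eqref{Y2}, enters.
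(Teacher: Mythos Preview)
Your proposal is correct and follows precisely the standard perturbation argument from \cite{CKSTT, TV, TVZ, KV} that the paper cites; the paper itself does not give a proof of this lemma, merely noting that it ``follows closely to that in the mass-critical case'' and referring to the literature. Your sketch---partitioning $I$ according to the $L^{q_d}_t W^{1,r_d}_x$-norm of $v$, closing a bootstrap on each subinterval via the Strichartz estimates and the nonlinear difference bound \eqref{Y2}, and iterating---is exactly that argument, and your identification of \eqref{Y2} as the source of the restriction $d\le 6$ is on point.
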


With Lemmas \ref{PROP:LWP2e}
and \ref{LEM:perturbe}
in hand, 
we can repeat the argument in 
 Proposition \ref{PROP:perturb2}
 and obtain the following proposition.
 The proof is essentially identical 
 to that of  Proposition~\ref{PROP:perturb2}
 and hence we omit details.
We point out that,  
in applying the perturbation lemma (Lemma~\ref{LEM:perturbe})
with $e = \N_1(v+f)  - \N_1(v)$, 
we use \eqref{Y2}, which imposes the restriction $d \leq 6$.

\begin{proposition}\label{PROP:perturb2e}
Let $3 \le d \le 6$.
Given $T>0$, assume the following conditions \textup{(i)} - \textup{(ii)}:
\begin{itemize}
\item[\textup{(i)}]
$f \in  L^{q_d} ([0, T]; W^{1,r_d}(\R^d))$
and 
there exist $C, \ta > 0$  such that
\begin{equation*}
 \|f\|_{L^{q_d}(I; W^{1,r_d}(\R^d))} \leq C |I|^{\ta}
 \end{equation*}

\noi
for any interval $I \subset [0, T]$.

\item[\textup{(ii)}]
Given
a solution
$v$  to  \eqref{ZNLS11}, 
 the following a priori $H^1$-bound holds:
\begin{align*}
\| v\|_{L^\infty([0, T]; H^1(\R^d))} \leq R
\end{align*}

\noi
for some $R\geq 1$.
\end{itemize}

\noi
Then,
there exists
$\tau = \tau(R,\ta)>0$
such that,  given any $t_0 \in [0, T)$,
a unique solution $v$ to~\eqref{ZNLS11} with $k=1$
exists on $[t_0, t_0 + \tau]\cap [0, T]$.
In particular, the condition \textup{(ii)} guarantees existence 
of a unique solution $v$ to the perturbed NLS \eqref{ZNLS11} on
the entire interval $[0, T]$.

\end{proposition}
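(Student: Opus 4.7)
The plan is to mimic the proof of Proposition \ref{PROP:perturb2} essentially line by line, replacing the mass-critical diagonal Strichartz pair with the energy-critical admissible pair $(q_d, r_d)$ from \eqref{adqr} and the $L^2$-based spaces with $H^1$- and $W^{1,r_d}$-based ones. The short-time local theory (Lemma~\ref{PROP:LWP2e}) and the long-time stability result (Lemma~\ref{LEM:perturbe}) are the energy-critical analogs of Lemmas~\ref{PROP:LWP2} and~\ref{LEM:perturb}, so the combinatorics of subdivisions and iterations carry over verbatim; only the function-space bookkeeping and the structural identity \eqref{Y2} need extra care.

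First I would rewrite \eqref{ZNLS11} as $i\partial_t v + \Delta v = |v|^{4/(d-2)} v + e$ with $e := \N_1(v+f) - \N_1(v)$. Fix $t_0 \in [0, T)$ and let $w$ denote the global solution to the defocusing energy-critical NLS with $w(t_0) = v(t_0)$. By hypothesis~(ii) we have $\|w(t_0)\|_{H^1} \leq R$, hence by the global space-time bound \eqref{bound} with $k=1$ (together with the standard Strichartz upgrade in $W^{1,r_d}$) there is a finite $C(R)$ with $\|w\|_{L^{q_d}(\R; W^{1,r_d}(\R^d))} \leq C(R)$. I then partition $[t_0, T]$ into $J = J(R, \eta)$ subintervals $I_j = [t_j, t_{j+1}]$ on each of which $\|w\|_{L^{q_d}(I_j; W^{1,r_d})} \leq \eta$, and use the Duhamel identity with Lemma~\ref{LEM:Str} and \eqref{est:energy1} to deduce $\|S(t-t_j)w(t_j)\|_{L^{q_d}(I_j; W^{1,r_d})} \leq 2\eta$, paralleling~\eqref{P2}.

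I would then iterate over the $I_j$'s intersected with $[t_0, t_0+\tau]$. Inductively assume $\|w(t_j) - v(t_j)\|_{H^1} \leq C_j(R)\eps$, which for $j=0$ is trivial. Combining this with the preceding bound on $S(t-t_j)w(t_j)$ and choosing $\tau$ so small that $\|f\|_{L^{q_d}(I; W^{1,r_d})} \leq C\tau^\theta \leq \eta$ on every subinterval, the hypothesis \eqref{Y1} of Lemma~\ref{PROP:LWP2e} is met, yielding existence of $v$ on $I_j \cap [t_0, t_0+\tau]$ with $\|v\|_{L^{q_d}(I_j; W^{1,r_d})} \les \eta$. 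The key error estimate is then obtained from the pointwise identity \eqref{Y2}, H\"older in space-time, and the Sobolev embedding $W^{1,r_d} \hookrightarrow L^{\rho_d}$ from~\eqref{est:energy1}: one gets, schematically,
\[
\|e\|_{L^{q_d'}(I_j; W^{1,r_d'})} \leq C\bigl(\eta + \tau^\theta\bigr)^{\frac{4}{d-2}} \tau^\theta + C\bigl(\eta + \tau^\theta\bigr)^{\frac{6-d}{d-2}} \tau^\theta \bigl(\eta + \tau^\theta\bigr) \leq C\tau^\theta \leq \eps.
\]
Applying Lemma~\ref{LEM:perturbe} on $I_j$ then propagates the $H^1$-closeness to the next endpoint: $\|w(t_{j+1}) - v(t_{j+1})\|_{H^1} \leq C_1(R)(C_j(R)+1)\eps =: C_{j+1}(R)\eps$, as in the mass-critical case.

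The main (and in fact only) genuinely new obstacle, and the source of the restriction $d \leq 6$, is the error estimate itself. The identity \eqref{Y2} requires the exponent $\frac{6-d}{d-2}$ to be nonnegative; otherwise $\nabla \N_1$ fails to be Lipschitz and the $W^{1,r_d'}$-control of $e$ breaks down. Once this estimate is in place, the closing scheme is exactly as in Proposition~\ref{PROP:perturb2}: choose $\eta>0$ as an absolute small constant, then $\eps = \eps(R) > 0$ small enough that $C_0 \cdot C_j(R)\eps \leq \eta$ and $(C_j(R)+1)\eps < \eps_0(R)$ for all $j \leq J - 1$, and finally $\tau = \tau(R, \theta) > 0$ small enough that the $f$- and $e$-smallness conditions hold on every $I_j$. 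This produces a unique solution $v$ to \eqref{ZNLS11} on $[t_0, t_0+\tau]$. Since $\tau$ is independent of $t_0 \in [0, T)$, iterating covers all of $[0, T]$.
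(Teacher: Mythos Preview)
Your proposal is correct and follows essentially the same approach as the paper, which explicitly says the proof is identical to that of Proposition~\ref{PROP:perturb2} modulo replacing the mass-critical spaces and lemmas with their energy-critical analogs (Lemmas~\ref{PROP:LWP2e} and~\ref{LEM:perturbe}) and using \eqref{Y2} for the error estimate, the latter being precisely what forces $d\leq 6$. Your identification of the error bound via \eqref{Y2} and \eqref{est:energy1} and the iterative closing scheme match the paper's intended argument.
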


\subsection{Proof of Theorem \ref{THM:GWP}\,(ii)}
\label{SUBSEC:energy2}

As in Subsection \ref{SUBSEC:mass2}, 
Theorem \ref{THM:GWP}\,(ii)
follows from applying  Proposition \ref{PROP:perturb2e}
to \eqref{ZNLS11} with $f = \Psi$, 
once we verify the hypotheses (i) and (ii).

Fix $T>0$.
As in Subsection \ref{SUBSEC:mass2}, 
 the hypothesis (i) in   Proposition \ref{PROP:perturb2e}
can easily be verified  from 
H\"older's inequality in time, Markov's inequality, and 
Lemma \ref{LEM:sconv}\,(ii), 
once we note that
\[r_d = \frac{2d^2}{d^2-2d+4} 
\leq \frac{2d}{d-2}.\]

\noi
Furthermore, 
 the following almost sure a priori bound 
 follows from Lemma \ref{LEM:bound}
and Markov's inequality:
\begin{align}
 \sup_{0\le t \le T} \Big(M(u(t))+ E(u(t))\Big) \leq C 
 \big(\o, T, M(u_0), E(u_0),  \| \phi \|_{\HS (L^2; H^1)}\big) < \infty
\label{B4}
\end{align}

\noi
for a solution $u$ to \eqref{SNLS}
with $p = 1 + \frac 4 {d-2}$.
Then, from \eqref{B4} and Lemma \ref{LEM:sconv}\,(i), we obtain
\begin{align*}
 \sup_{0\le t \le T}\|v(t)\|_{H^1}
  & \leq   \sup_{0\le t \le T}\|u(t)\|_{H^1}
+    \sup_{0\le t \le T}\| \Psi(t)\|_{H^1}\\
& \leq C 
 \big(\o, T, M(u_0), E(u_0),  \| \phi \|_{\HS (L^2; H^1)}\big)< \infty
\end{align*}

\noi
almost surely.
This shows that the hypothesis (ii) in   Proposition \ref{PROP:perturb2e}
holds 
almost surely for some almost surely finite $R = R(\o) \geq 1$.
This proves Theorem~\ref{THM:GWP}\,(ii).

\appendix

\section{On the application of Ito's lemma}
\label{SEC:A}

In this appendix, we briefly 
discuss  the derivation of 
the a priori bounds 
on the mass and the energy 
stated in 
Lemma \ref{LEM:bound}.
The argument essentially follows
from that by de Bouard-Debussche \cite{dBD03} 
but we indicate certain required modifications.

\subsection{Mass-critical case} \label{SUBSEC:A1}
We first consider the mass-critical case.
Given $N \in \NB$, let 
$P_{ N}$ denote
a smooth frequency projection onto $\{ |\xi| \les N \}$ and 
set $\phi_N := P_{N} \circ \phi$.
Then, consider
 the following truncated SNLS:
\begin{equation}
\label{SNLS5a}
\begin{cases}
i \dt u_N + \Dl u_N =  \N_0 (u_N) + \phi_N \xi \\
u_N|_{t = 0} = P_{N} u_0,
\end{cases}
\end{equation}

\noi
where $\N_0$ is as in \eqref{nonlin}.
Note that  $P_N u_0 \in H^1(\R^d)$
and $\phi_N \in \HS(L^2; H^1)$.
Therefore, it follows from \cite{dBD03}
that 
\eqref{SNLS5a} is globally well-posed for each $N \in \NB$.
Furthermore, from Proposition 3.2 in \cite{dBD03},
we have 
\begin{align}
M(u_N(t))
= M( P_{N} u_0) + 2 \Im \sum_{n \in \NB} \int_0^t \int_{\R^d}
 \cj{u_N(t')} \phi_N e_n dx d \beta_n (t') + 2t \| \phi_N \|_{\HS (L^2; L^2)}^2
\label{SNLS6}
\end{align}

\noi
for any $t\geq 0$
and,  
as a consequence of \eqref{SNLS6}
and the Burkholder-Davis-Gundy inequality
(see, for example, \cite[Theorem 3.28 on p.\,166]{KS}), 
 the a priori bound \eqref{app1}
holds for each $u_N$, 
with the constant $C_1$,  independent of $N \in \NB$.

%
%
%

Given $T_0 > 0$, 
let $0 < T < \min (T^*, T_0)$ be 
a given stopping time as in Lemma \ref{LEM:bound}\,(i)
and 
$u$ be 
the solution
to \eqref{SNLS} constructed in Lemma \ref{LEM:LWPa}\,(i).
We now show that 
the solution  $u_N$ 
to the truncated SNLS \eqref{SNLS5a}
converges to $u$ almost surely.  
Then,  the a priori bound \eqref{app1}
for $u$ 
follows from that for $u_N$ mentioned above
and the convergence of $u_N$ to~$u$.

In the following, we suppress the spatial domain $\R^d$ for 
simplicity of the presentation.
Given $R > 0$, define a stopping time $T_1$ by setting
\[ T_1 = T_1(R): = \inf \Big\{ \tau \geq 0: \| u \|_{L_{t,x}^{\frac{2(d+2)}{d}}([0, \tau] )}
\geq R\Big\} \]

\noi
and set $T_2 : = \min(T, T_1)$.
In view of the  blowup alternative in Lemma \ref{LEM:LWPa}, 
we have
\begin{equation*} 
\| u \|_{L_{t,x}^{\frac{2(d+2)}{d}}([0,T] )} < \infty
\end{equation*}

\noi
almost surely
and hence we conclude that $T_2 \nearrow T$ almost surely as $R \to \infty$.

Given small $\eta > 0$ (to be chosen later),
we divide the interval $[0, T_2]$
into $J = J(R, \eta)$ many random subintervals $I_j = I_j(\o) = [t_j, t_{j+1}]$
with $t_0 = 0 < t_1 < \cdots < t_{J-1} < t_{J} = T_2$
such that
\begin{equation} \label{divideint}
\|u\|_{L_{t,x}^{\frac{2(d+2)}{d}}(I_j)} 
 \sim \eta
\end{equation}

\noi
for $j = 0, 1, \dots, J-1$.

Define the truncated stochastic convolution $\Psi_N$ by 
\[
\Psi_N (t) := -i \sum_{n \in \NB} \int_0^t S(t-t') \phi_N e_n d \beta_n (t')
\]

\noi
and set
\begin{align}
C_N^{(j)}(\o, u_0, \phi)
& = \| u(t_j) - u_N(t_j) \|_{L^2} \notag \\
& \hphantom{X}
+ \| \Psi - \Psi_N \|_{L_{t,x}^{\frac{2(d+2)}{d}} ([0, T_2])} 
+ \| \Psi - \Psi_N \|_{L^{\infty} ([0, T_2];L^2)}
\label{O1a}
\end{align}

\noi
for $j = 0, 1, \dots, J-1$.
Then,
it follows from 
 the Lebesgue dominated convergence theorem
(applied to $(\Id - P_N)u_0$)
and  Lemma \ref{LEM:sconv} that 
\begin{align}
 C_N^{(0)}(\o, u_0, \phi)
\too 0 
\label{O1b}
\end{align}

\noi
almost surely as $N \to \infty$.

From the Strichartz estimates
(Lemma \ref{LEM:Str}), we have
\begin{align}
 \| u -  u_N  & \|_{L_{t,x}^{\frac{2(d+2)}{d}} ([0, \tau])} 
\lesssim \| u(0) - u_N(0) \|_{L^2} 
\notag   \\
&\quad 
+ \bigg( \| u \|_{L_{t,x}^{\frac{2(d+2)}{d}}([0, \tau])} 
+ \| u_N \|_{L_{t,x}^{\frac{2(d+2)}{d}}([0, \tau])} \bigg)^{\frac{4}{d}} 
\| u - u_N \|_{L_{t,x}^{\frac{2(d+2)}{d}}([0, \tau])} \notag \\
&\quad 
+ \| \Psi - \Psi_N \|_{L_{t,x}^{\frac{2(d+2)}{d}} ([0, \tau])} 
\label{O1}
\end{align}

\noi
for any subinterval $[0, \tau] \subset I_0 = [0, t_1]$.
Then, from \eqref{O1} with 
  \eqref{divideint} and \eqref{O1a},
  we obtain 
\begin{align}
 \| u - u_N \|_{L_{t,x}^{\frac{2(d+2)}{d}} ([0, \tau])} 
&\lesssim C_N^{(0)}(\o, u_0, \phi)
\notag   \\
&\quad 
+ \bigg( \eta 
+ \| u_N \|_{L_{t,x}^{\frac{2(d+2)}{d}}([0, \tau])} \bigg)^{\frac{4}{d}} 
\| u - u_N \|_{L_{t,x}^{\frac{2(d+2)}{d}}([0, \tau])} 
\label{O2}
\end{align}

\noi
for any $0 \leq \tau \leq t_1$.
By taking $\eta > 0$ sufficiently small, 
 a standard continuity argument 
with  \eqref{O2}
and \eqref{O1b}
yields
\begin{equation} 
\|u_N\|_{L_{t,x}^{\frac{2(d+2)}{d}}(I_0)} 
 \sim \eta, 
 \label{O3}
\end{equation}

\noi
uniformly in $N \geq N_0(\o)$.
Applying Lemma \ref{LEM:Str} once again
with \eqref{divideint} and \eqref{O3}, 
we then have 
\begin{align*}
\| u - u_N \|_{L^{\infty} (I_0;L^2)} 
& +  \| u - u_N \|_{L_{t,x}^{\frac{2(d+2)}{d}} (I_0)} \\
&\lesssim \| u(0) - u_N(0) \|_{L^2} 
+ \eta^{\frac{4}{d}} 
\| u - u_N \|_{L_{t,x}^{\frac{2(d+2)}{d}}(I_0)} \notag \\
&\quad 
+ \| \Psi - \Psi_N \|_{L^{\infty} (I_0;L^2)}
+ \| \Psi - \Psi_N \|_{L_{t,x}^{\frac{2(d+2)}{d}} (I_0)} 
\end{align*}

\noi
uniformly in $N \geq N_0(\o)$.
Thus, from \eqref{O1a} and \eqref{O1b}, we conclude that 
\begin{align*}
\| u - u_N \|_{L^{\infty} (I_0;L^2)} 
&\les C_N^{(0)}(\o, u_0, \phi) \too 0 
\end{align*}

\noi
as $N \to \infty$.
In particular, we have
\begin{align}
\| u (t_1) - u_N (t_1) \|_{L^2} 
&\les C_N^{(0)}(\o, u_0, \phi) \too 0 
\label{O4}
\end{align}

\noi
as $N \to \infty$.
By repeating the argument above, 
we have
\begin{equation*} 
\|u_N\|_{L_{t,x}^{\frac{2(d+2)}{d}}(I_1)} 
 \sim \eta, 
\end{equation*}

\noi
uniformly in $N \geq N_1(\o)$.
Together with \eqref{O4}, this 
 yields
\begin{align*}
\| u - u_N \|_{L^{\infty} (I_1;L^2)} 
&\les C_N^{(1)}(\o, u_0, \phi) \too 0 
\end{align*}

\noi
as $N \to \infty$.

By successively applying the argument above
to the interval $I_j$, $j = 0, 1, \dots, J-1$, 
we conclude that 
\begin{align*}
\| u - u_N \|_{L^{\infty} (I_j;L^2)} 
&\les C_N^{(j)}(\o, u_0, \phi) \too 0 
\end{align*}

\noi
as $N\to \infty$.
Therefore, recalling that $J= J(R, \eta)$ depends only on $R>0$ and an absolute constant $\eta > 0$, 
we obtain 
\begin{align*}
\| u - u_N \|_{L^{\infty} ([0, T_2] ;L^2)} 
&\les \sum_{j = 0}^{J-1}C_N^{(j)}(\o, u_0, \phi) \too 0. 
\end{align*}

\noi
By the almost sure convergence of $u_N$ to $u$ in $C([0, T_2]; L^2(\R^d))$, 
 Fatou's lemma,
 and the uniform bound  \eqref{app1} for $u_N$,  we then have
\begin{align*}
\E \bigg[ \sup_{0\le t \le T_2} M(u(t)) \bigg] 
& = \E \bigg[ \lim_{N \to \infty} \sup_{0\le t \le T_2} M(u_N(t)) \bigg]\notag \\
& \leq \lim_{N \to \infty} \E \bigg[  \sup_{0\le t \le T_2} M(u_N(t)) \bigg]
\le C_1.
\end{align*}

\noi
Finally, 
from the almost sure convergence
of  $T_2 = T_2(R)$ to $T$, as $R\to \infty$, 
and Fatou's lemma, we conclude the bound \eqref{app1}
for $u$.
This proves Lemma \ref{LEM:bound}\,(i).

\subsection{Energy-critical case}
Next, we consider the energy-critical case.
In the following, we only discuss the 
a priori bound on the energy:
\begin{align}
 \E \left[ \sup_{0\le t \le T} E(u (t)) \right] \le C_2,
\label{app2}
\end{align}

\noi
since the a priori bound on the mass follows
in a similar but simpler manner.

\begin{lemma}\label{LEM:ito}
Assume the hypotheses in Lemma \ref{LEM:LWPa} \textup{(ii)}.
Then, for any stopping time $T$ such that $0 < T<T^\ast$ almost surely,
we have
\begin{align}
E(u(T))
&= E( u_0) - \Im \sum_{n \in \NB} \int_0^T \int_{\R^d}
\cj{(\Delta u -  \N_1(u)) (t')} \phi e_n dx d \beta_n (t') \notag\\
&\quad + \frac{d}{d-2} \sum_{n \in \NB} 
\int_0^T \int_{\R^d} |u(t')|^{\frac{4}{d-2}} | \phi e_n |^2 dx dt' + T \| \phi \|_{\HS (L^2; \dot H^1)}^2,
\label{E1}
\end{align}
where $u$ is the solution to the  energy-critical SNLS \eqref{SNLS}
with $p = 1 + \frac{4}{d-2}$, 
$\N_1$ is as in~\eqref{nonlin}, 
and $T^*$ is the forward maximal time of existence.
\end{lemma}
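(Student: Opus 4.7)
The plan is to mirror the Galerkin-truncation argument of Subsection \ref{SUBSEC:A1}, replacing mass by energy, $L^2$ by $H^1$, and the Strichartz space $L^{2(d+2)/d}_{t,x}$ by $L^{q_d}_t W^{1,r_d}_x$. The argument proceeds in three steps.

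First, consider the smoothed SNLS
\begin{equation*}
i\partial_t u_N + \Delta u_N = \N_1(u_N) + \phi_N \xi, \qquad u_N|_{t=0} = P_N u_0,
\end{equation*}
where $P_N$ is a smooth frequency projector onto $\{|\xi| \lesssim N\}$ and $\phi_N = P_N \circ \phi \in \HS(L^2; H^s)$ for every $s \ge 0$. Since $P_N u_0$ and the range of $\phi_N$ lie in every $H^s$, a standard $H^2$-contraction produces a smooth local-in-time solution $u_N$, to which Ito's formula is rigorously applicable with $E$ viewed as a $C^2$-functional on $H^1$. A direct computation yields the $(u_N, \phi_N)$-version of \eqref{E1}: the deterministic drift $\Re \int \overline{(-\Delta u_N + \N_1(u_N))} \cdot i(\Delta u_N - \N_1(u_N)) dx$ vanishes identically by the Hamiltonian structure; pairing $E'(u_N) = -\Delta u_N + \N_1(u_N)$ against the noise $-i \phi_N dW$ and taking real parts produces the martingale sum; and the It\^o correction $\tfrac{1}{2}\sum_n D^2 E(u_N)[\phi_N e_n, \phi_N e_n]$ produces the remaining two deterministic terms (using that $p/2 = d/(d-2)$ for the nonlinear contribution). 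This is the $H^1$-analogue of Proposition 3.2 in \cite{dBD03}.

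Second, I would establish the almost sure convergence $u_N \to u$ in $C([0, T_2]; H^1(\R^d)) \cap L^{q_d}([0, T_2]; W^{1,r_d}(\R^d))$ for a suitable localization $T_2$ of $T$. Fix $R > 0$, set
\begin{equation*}
T_1 := \inf\Big\{ \tau \ge 0 : \|u\|_{L^{q_d}([0,\tau]; W^{1,r_d}(\R^d))} \ge R \Big\}, \qquad T_2 := \min(T, T_1),
\end{equation*}
so that $T_2 \nearrow T$ almost surely as $R \to \infty$ by the blowup alternative of Lemma \ref{LEM:LWPa}(ii). Partition $[0, T_2]$ into $J = J(R, \eta)$ random subintervals $I_j$ on which $\|u\|_{L^{q_d}(I_j; W^{1,r_d})} \sim \eta$ for a small absolute constant $\eta > 0$. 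On each $I_j$, the Strichartz estimate \eqref{Str1} together with the Lipschitz bound \eqref{Y2} on $\nabla \N_1$ (where the restriction $d \le 6$ is essential) and a standard continuity bootstrap yield, inductively in $j$,
\begin{equation*}
\|u - u_N\|_{L^\infty(I_j; H^1(\R^d)) \cap L^{q_d}(I_j; W^{1,r_d}(\R^d))} \lesssim C_N^{(j)}(\omega),
\end{equation*}
where $C_N^{(j)}$ collects $\|u(t_j) - u_N(t_j)\|_{H^1}$ together with the truncation error $\|\Psi - \Psi_N\|_{L^\infty([0,T_2]; H^1) \cap L^{q_d}([0,T_2]; W^{1,r_d})}$, both tending to $0$ almost surely by Lemma \ref{LEM:sconv} and dominated convergence. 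Summing over the finitely many subintervals yields the desired convergence.

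Finally, I would send $N \to \infty$ in the $u_N$-version of \eqref{E1} evaluated at $T_2$. Almost sure convergence of $E(u_N(T_2)) \to E(u(T_2))$ and of $E(P_N u_0) \to E(u_0)$ follows from the $H^1$-convergence and the Sobolev embedding $H^1 \hookrightarrow L^{2d/(d-2)}$. The term $T_2 \|\phi_N\|_{\HS(L^2; \dot H^1)}^2$ converges by Hilbert--Schmidt truncation, and the It\^o correction converges via H\"older, Sobolev, and $\phi_N \to \phi$ in $\HS(L^2; H^1)$. The stochastic integral converges in probability by the Burkholder--Davis--Gundy inequality applied to the difference of integrands, controlled through Step 2 and dual Strichartz estimates on $\Delta(u_N - u) - (\N_1(u_N) - \N_1(u))$. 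Letting $R \to \infty$ and invoking path continuity of both sides promotes the identity at $T_2$ to the original stopping time $T$. The principal obstacle is the $H^1$-convergence of $u_N$ to $u$ at the energy-critical scaling, which is precisely where the restriction $d \le 6$ becomes essential; all subsequent passages to the limit are then largely routine.
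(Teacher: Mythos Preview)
Your overall scheme matches the paper's, but there is one substantive difference in the truncation, and it creates a gap in Step~1.

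You approximate by
\[
i\partial_t u_N + \Delta u_N = \N_1(u_N) + \phi_N\xi,\qquad u_N|_{t=0}=P_N u_0,
\]
and claim that a standard $H^2$-contraction yields a smooth local solution to which Ito's lemma applies. The paper instead truncates the \emph{nonlinearity} as well, studying
\[
i\partial_t u_N + \Delta u_N = P_N\N_1(u_N) + \phi_N\xi,\qquad u_N|_{t=0}=P_N u_0.
\]
The point of the extra projector is that it forces $u_N=P_{3N}u_N$, so $u_N(t)\in\bigcap_s H^s$ for as long as it exists, and Ito's lemma (Theorem~4.32 in \cite{DZ}) applies on the whole existence interval without any regularity-persistence argument. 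The price is an additional drift term
\[
\Im\int_0^t\!\!\int_{\R^d}\overline{\Delta u_N}\,(\Id-P_N)\N_1(u_N)\,dx\,dt'
\]
in the approximate Ito identity, which the paper then shows tends to $0$ via the same $L^{q_d}_tW^{1,r_d}_x$ bounds you use in Step~2.

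In your version this commutator term is absent, but you must instead know that the \emph{smooth} solution $u_N$ persists on all of $[0,T_2]$; your Step~2 only gives $u_N\to u$ in $C_tH^1\cap L^{q_d}_tW^{1,r_d}_x$, which controls the $H^1$-solution, not the $H^2$-solution on which you invoked Ito. Bridging this requires a persistence-of-regularity statement for the energy-critical problem (finite $L^{q_d}_tW^{1,r_d}_x$ norm $\Rightarrow$ propagation of $H^s$, $s>1$), which is available but nontrivial and sensitive to the smoothness of $\N_1$ for $d=5,6$. You should either supply that argument or, more simply, add the projector on the nonlinearity as the paper does and handle the resulting extra term.
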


Once we prove Lemma \ref{LEM:ito}, 
the bound \eqref{app2} follows
from the Burkholder-Davis-Gundy inequality.

\begin{proof}
A  direct calculation shows that
\begin{align*}
E'(u(t)) (v) 
&=  \Re \int_{\R^d} \Big( \nabla u(t) \cdot \cj{\nabla v} + 
\N_1(u) (t) \cj{v} \Big) dx, \\
E'' (u(t)) (v_1,v_2)
&= \Re \int_{\R^d} \nabla v_1 \cdot \cj{\nabla v_2 } dx
+ \Re \int_{\R^d} |u(t)|^{\frac 4{d-2}} v_1 \cj{v_2} dx \\
&\quad+ \frac 4{d-2} \int_{\R^d} |u(t)|^{\frac 4{d-2}-2}
\Re \big( u(t) \cj{v_1} \big) \Re \big( u(t) \cj{v_2} \big) dx
\end{align*}
for $v, v_1, v_2 \in H^1(\R^d)$.
Thus,  
a formal application of Ito's lemma to $E(u(t))$ yields \eqref{E1}.
It remains to justify the application of Ito's lemma.

As in the proof of Proposition 3.3 in \cite{dBD03},
given $N \in \NB$,
we consider the following truncated problem:
\begin{equation}
\label{SNLS5}
\begin{cases}
 i \dt u_N  + \Dl u_N = P_N \N_1(u_N)
+ \phi_N \xi \\
 u_N|_{t = 0} = P_N u_0,
\end{cases}
\qquad (t, x) \in \R_+\times \R^d,
\end{equation}

\noi
where $P_N$ and $\phi_N$ are the same as those in Subsection \ref{SUBSEC:A1}.
Since the frequency truncation is harmless, the same well-posedness result as in Lemma \ref{LEM:LWPa} (ii) holds
for the truncated SNLS~\eqref{SNLS5}.
Moreover, by considering the corresponding Duhamel formulation  for \eqref{SNLS5}, we have $u_N = P_{3N} u_N$.
We can therefore apply Ito's lemma (see Theorem 4.32 in \cite{DZ}) to $E(u_N(t))$ and obtain
\begin{align}
E(u_N(t))
&= E(P_N u_0)
-\Im \sum_{n \in \NB} \int_0^t \int_{\R^d}
\cj{( \Delta u_N - \N_1( u_N) )(t')} \phi_N e_n dx d \beta_n (t') \notag\\
&\quad
+ \Im \int_0^t \int_{\R^d}
\cj{\Delta u_N(t') }
( \Id - P_N )\N_1( u_N)  (t') dx dt' \notag\\
&\quad + \frac{d}{d-2} \sum_{n \in \NB} \int_0^t \int_{\R^d} |u_N(t')|^{\frac{4}{d-2}} | \phi_N e_n |^2 dx dt' + t \| \phi_N \|_{\HS(L^2; \dot{H}^1)}^2
\label{E3}
\end{align}
 for $0< t< T^\ast_N$,
where $T^\ast_N$ is the forward maximal time of existence for 
the solution $u_N$ to~\eqref{SNLS5}.

Given $R > 0$, define a stopping $T_1$ by setting
\[ T_1 = T_1(R): = \inf \Big\{ \tau \geq 0: \| u \|_{L^{q_d} ([0,\tau]; W^{1,r_d})}
\geq R\Big\} \]

\noi
and set $T_2 : = \min(T, T_1)$, 
where $T$ is the stopping time given in Lemma \ref{LEM:bound}\,(ii)
with $0 < T < \min (T^*, T_0)$.
In view of the  blowup alternative in Lemma \ref{LEM:LWPa}, 
we have
\begin{equation} \label{blowal2}
\| u \|_{L^{q_d} ([0,T]; W^{1,r_d})} < \infty
\end{equation}

\noi
almost surely
and hence we conclude that $T_2 \nearrow T$ almost surely as $R \to \infty$.

From \eqref{est:energy1} and \eqref{Y2}, we have
\begin{align}
\| \N_1 & (u) - P_N \N_1(u_N) \|_{L^{q_d'} (I; W^{1,r_d'})} \notag\\
&\les  \| (\Id-P_N) \N_1(u) \|_{L^{q_d'} (I; W^{1,r_d'})}        + \| \N_1(u) - \N_1 (u_N) \|_{L^{q_d'} (I; W^{1,r_d'})} \notag\\
&\les
\| (\Id-P_N) \N_1(u) \|_{L^{q_d'} (I; W^{1,r_d'})} \notag\\
&\hphantom{X} + 
\Big( \| u \|_{L^{q_d}(I; W^{1,r_d})} + \| u_N \|_{L^{q_d}(I; W^{1,r_d})} \Big)^{\frac 4{d-2}}
\| u - u_N \|_{L^{q_d}(I; W^{1,r_d})}
\label{D2}
\end{align}

\noi
for any interval $I \subset [0,T_2]$.
It follows from the Lebesgue dominated convergence theorem and~\eqref{blowal2}
that the first term on the right-hand side of \eqref{D2} converges to $0$ almost surely as $N \to \infty$.
Accordingly, proceeding as in  Subsection \ref{SUBSEC:A1}, 
we conclude that 
 $u_N$ converges to $u$ in $C([0, T_2]; H^1(\R^d)) \cap L^{q_d}([0,T_2]; W^{1,r_d}(\R^d))$
 almost surely.
In particular, 
there exists an almost surely finite number $N_0 (\o) \in \NB$ such that 
$T^\ast_N \ge T_2$ for any $N \geq N_0(\o)$
and, as a result, 
 \eqref{E3} holds for any $0 <t < T_2$
 and $N \geq N_0(\o)$.
Moreover, from the definition of~$T_2 = T_2(R)$, we may assume  
 \begin{equation} \label{blowal3}
\| u_N \|_{L^{q_d} ([0,T_2]; W^{1,r_d})} \leq R + 1
\end{equation}

\noi
for any $N \geq N_0(\o)$.

This allows us to conclude that 
 the third term on the right-hand side of \eqref{E3} tends to~$0$ almost surely as $N \to \infty$.
Indeed,
by \eqref{est:energy1}, \eqref{Y2},  \eqref{blowal2}, 
 \eqref{blowal3},
 and the almost sure convergence of $u_N$ to $u$
 in $L^{q_d}([0,T_2]; W^{1,r_d}(\R^d))$, 
we have, for any $0 \leq t \leq T_2$, 
\begin{align}
\bigg|
\int_0^t  & \int_{\R^d}
\cj{\Delta u_N(t') }
( \Id - P_N ) \N_1(u_N ) (t') dx dt' \bigg|  \notag \\
&\le
\bigg|
\int_0^t \int_{\R^d}
( \Id - P_N ) 
\cj{\Delta u(t')} \cdot \N_1 (u) (t') dx dt' \bigg| \notag \\
&\quad+
\bigg|
\int_0^t \int_{\R^d}
\cj{(\Delta u - \Delta u_N)(t')} 
( \Id - P_N ) \N_1(u) (t') dx dt' \bigg| \notag \\
&\quad+
\bigg|
\int_0^t \int_{\R^d}
\cj{\Delta u_N(t') }
( \Id - P_N ) \big( \N_1 (u) - \N_1(u_N) \big) (t') dx dt' \bigg| \notag \\
&\les
\| (\Id-P_N) u \|_{L^{q_d}([0,T_2]; W^{1,r_d})}
\| \N_1(u) \|_{L^{q_d'}([0,T_2]; W^{1,r_d'})} \notag \\
&\quad+
\| u - u_N \|_{L^{q_d}([0,T_2]; W^{1,r_d})}
\| \N_1(u) \|_{L^{q_d'}([0,T_2]; W^{1,r_d'})} \notag \\
&\quad+
\| u_N \|_{L^{q_d}([0,T_2]; W^{1,r_d})}
\| \N_1(u) - \N_1(u_N) \|_{L^{q_d'}([0,T_2]; W^{1,r_d'})} \notag \\
&\les
\| (\Id-P_N) u \|_{L^{q_d}([0,T_2]; W^{1,r_d})}
\| u \|_{L^{q_d}([0,T_2]; W^{1,r_d})}^{\frac{d+2}{d-2}} \notag \\
&\quad+
\Big( \| u \|_{L^{q_d}([0,T_2]; W^{1,r_d})} + \| u_N \|_{L^{q_d}([0,T_2]; W^{1,r_d})} \Big)^{\frac{d+2}{d-2}}
\| u - u_N \|_{L^{q_d}([0,T_2]; W^{1,r_d})}\notag \\
& \too 0
\label{E4}
\end{align}

\noi
almost surely, 
as $N \to \infty$.

Let us now consider the second and fourth terms on  the right-hand side of \eqref{E3}.
As for the second term, we first  consider 
the contribution from $\cj{\N_1(u_N)} \phi_N e_n$.
By H\"older's inequality with \eqref{nonlin} and Sobolev's embedding: $\dot H^1(\R^d) \embeds L^{\frac{2d}{d-2}}(\R^d)$,
we have
\begin{align*}
\bigg|\int_{\R^d} \cj{\N_1(u_N)}  \phi_N e_n dx\bigg|
\le \| u_N \|_{L^{\frac{2d}{d-2}}}^{\frac{d+2}{d-2}} \| \phi_N e_n \|_{L^{\frac{2d}{d-2}}} 
\les \| u_N \|_{\dot H^1}^\frac{d+2}{d-2} \| \phi_N e_n \|_{\dot H^1}.
\end{align*}

\noi
Then, by Ito's isometry along with the independence of $\{\be_n\}_{n\in \NB}$, 
we obtain
\begin{align}
\E\Bigg[ \bigg|\sum_{n \in \NB} \int_0^t \int_{\R^d}
& \cj{ \N_1( u_N) (t')} \phi_N e_n dx   d \beta_n (t') \bigg|^2\Bigg]
\les t \sum_{n \in \NB} 
\| u_N \|_{L^\infty([0, t]; \dot H^1)}^{\frac{2(d+2)}{d-2}}
 \| \phi_N e_n \|_{\dot H^1}^2\notag \\
& \les t 
\| u_N \|_{L^\infty([0, t]; \dot H^1)}^{\frac{2(d+2)}{d-2}}
 \| \phi_N \|_{\HS(L^2; \dot H^1)}^2.
\label{E5}
\end{align}

\noi
By integration by parts (in $x$)
and Ito's isometry, we  bound 
the contribution from 
$\cj{ \Delta u_N } \phi_N e_n$  by 
\begin{align}
\E\Bigg[ \bigg|\sum_{n \in \NB} \int_0^t \int_{\R^d}
 \cj{ \Dl u_N (t')} \phi_N e_n dx   d \beta_n (t') \bigg|^2\Bigg]
& \les t 
\| u_N \|_{L^\infty([0, t]; \dot H^1)}^2
 \| \phi_N \|_{\HS(L^2; \dot H^1)}^2.
\label{E6}
\end{align}

\noi
As for the  fourth term on the right-hand side of \eqref{E3}, 
it follows from H\"older's and Sobolev's inequalities that 
\begin{align}
\sum_{n \in \NB} 
\int_{\R^d} |u_N|^{\frac{4}{d-2}} |\phi _Ne_n|^2 dx
& \le \| u_N \|_{L^{\frac{2d}{d-2}}}^{\frac{4}{d-2}}
\sum_{n \in \NB} 
 \| \phi_N e_n \|_{L^{\frac{2d}{d-2}}}^2\notag \\
& \les \| u_N \|_{\dot H^1}^{\frac{4}{d-2}} \| \phi_N  \|_{\HS(L^2; \dot H^1)}^2.
\label{E7}
\end{align}

\noi
Since
$3 \le d \le 6$, we have $\frac{4}{d-2} \ge 1$,
which implies that difference estimates 
on the contributions from $u_N$ and $u$ 
for \eqref{E5}, \eqref{E6}, and \eqref{E7}
also hold.
Therefore, by 
in view of~\eqref{E4}
and (the difference estimates for) 
\eqref{E5}, \eqref{E6}, and \eqref{E7}, 
we obtain 
 \eqref{E1}
 by taking 
 $N \to \infty$ in \eqref{E3} and then $R \to \infty$.
This concludes the proof of Lemma \ref{LEM:ito}.
\end{proof}

\begin{acknowledgment}

\rm 
T.O.~was supported by the European Research Council (grant no.~637995 ``ProbDynDispEq'').
M.O.~was supported by JSPS KAKENHI Grant number JP16K17624.
The authors would like to thank the anonymous referee for  helpful comments.
\end{acknowledgment}

\end{document}